\DeclarePairedDelimiterX{\norm}[1]{\lVert}{\rVert}{#1}
\newtheorem{thm}{Theorem}[section]
\newtheorem{thmA}{Theorem}
\newtheorem{prop}[thm]{Proposition}
\newtheorem{lemma}[thm]{Lemma}
\theoremstyle{definition}
\theoremstyle{remark}
\newtheorem{remark}[thm]{Remark}
\DeclareMathOperator{\Area}{Area}
\DeclareMathOperator{\Ric}{Ric}
\DeclareMathOperator{\ind}{I}
\DeclareMathOperator{\II}{\mathrm{I \! I}}
\begin{document}

%\doublespacing

\begin{abstract}
We prove a local rigidity result for infinitesimally rigid capillary surfaces in some Riemannian $3$-manifolds with mean convex boundary. We also derive bounds on the genus, number of boundary components and area of any compact two-sided capillary minimal surface with low index under certain assumptions on the curvature of the ambient manifold and of its boundary.
\end{abstract}

\title[Low index capillary minimal surfaces in Riemannian $3$-manifolds]{Low index capillary minimal surfaces \\in Riemannian $3$-manifolds}
\author{Eduardo Longa}

\address{Departamento de Matem\'{a}tica, Instituto de Matem\'{a}tica e Estat\'{i}stica, Universidade de S\~{a}o Paulo, R. do Mat\~{a}o 1010, S\~{a}o Paulo, SP 05508-900, Brazil}
\email{eduardo.r.longa@gmail.com}

\subjclass[2010]{53A10, 53C42}

\keywords{Minimal surface, capillary surface, rigidity, scalar curvature}

\maketitle

\let\thefootnote\relax\footnote{The author was partially supported by grant 2017/22704-0, São Paulo Research Foundation (FAPESP).}

\section{Introduction}

Minimal surfaces are critical points for the area functional under suitable constraints. If we only consider closed surfaces, no constraints are necessary. If the surfaces have a fixed boundary, this leads to the so called Plateau problem, first studied by Lagrange \cite{lagrange} and Meusnier \cite{meusnier}. However, there is a third situation that can be investigated: when we consider compact surfaces whose boundaries are allowed to move freely in the boundary of the ambient manifold. Not surprisingly, critical points of the area functional under this constraint are called \textit{free boundary} minimal surfaces. The boundary of such surfaces meet the boundary of the ambient manifold at a $90^\circ$ angle.

Although the first works dealing with this type of surfaces date back to 1938 with R. Courant (see \cite{courant1938} and \cite{courant1950}), in the last decade there have been incredible developments in this field, with the employment of new techniques and the emergence of interesting conceptual links. Among the main contributors to this topic of research we could cite Fraser, Chen and Pang, with their work on free boundary surfaces on positively curved ambients \cite{chen2014}, Ambrozio with his work on rigidity of mean-convex manifolds \cite{ambrozio2015}, and additionally Carlotto and Sharp (jointly with Ambrozio), with their works on compactness analysis and index estimates for free boundary minimal hypersurfaces (\cite{ambrozio2018} and \cite{ambrozio2018_2}).

In this paper we are interested in a natural generalisation of free boundary minimal surfaces, namely, capillary minimal surfaces. These are critical points of a certain energy functional, which will be presented in Section \ref{preliminaries_capillary}. As will be deduced later, they can be characterised as minimal surfaces whose boundary meet the ambient boundary at a constant angle --- the capillary angle. 

Capillary surfaces in $\mathbb{R}^3$ model the configuration of liquids in containers in the absence of gravity. In fact, the interface between the fluid and the air is a surface with boundary that (locally) minimises the energy functional. This energy depends on the area of the interface, the area wetted by the fluid in the container and the angle of contact between the surface and the boundary of the container. More general situations have also been considered in the literature, like the influence of gravity and the density of the fluid in the equilibrium shape. We refer the interested reader to the book of Finn \cite{finn} for an extensive survey on this subject and the derivation of the equations that describe such surfaces.

Like in the free boundary case, questions relating the topology and the geometry of surfaces raise a lot of attention from geometers. For instance, given an ambient manifold $(M^3,g)$ of a particular shape, what are the possible topological types of surfaces $\Sigma$ that admit a capillary CMC or minimal embedding into $M$? Is it possible to characterise the geometry of the allowed types? The first result in this direction was obtained by Nitsche \cite{nitsche1985}, who proved that any immersed capillary disc in the unit ball of $\mathbb{R}^3$ must be either a spherical cap or a flat disc. Later, Ros and Souam \cite{ros1997} extended this result to capillary discs in balls of $3$-dimensional space forms. Recently, Wang and Xia \cite{wang2019} analysed the problem in an arbitrary dimension and proved that any stable immersed capillary hypersurface in a ball in space forms is totally umbilical. There are many interesting uniqueness results in other types of domains, like slabs \cite{ainouz2016}, wedges \cite{park2005, choe2016}, cylinders \cite{lopez2017} and cones \cite{ritore2004}.

We are interested in the same questions raised above, but in a more general setting. Namely, we only impose curvature assumptions on the ambient $3$-manifold and look for restrictions in the topology of the possible immersed (or embedded) capillary minimal surfaces. 

When dealing with the free boundary case, Ambrozio introduced the following functional in the space of
compact and properly immersed surfaces in a Riemannian $3$-manifold $M$:
\begin{align*}
\ind(\Sigma) = \frac{1}{2} \inf_M R_M \vert \Sigma \vert + \inf_{\partial M} H_{\partial M} \vert \partial \Sigma \vert,
\end{align*}
where $R_M$ is the scalar curvature of $M$, $H_{\partial M}$ is the mean curvature of $\partial M$, $\vert \Sigma \vert$ denotes the area of $\Sigma$ and $\vert \partial \Sigma \vert$ denotes the length of $\partial \Sigma$. 

As a first result, we modify Ambrozio's functional to take care of the capillary case and we show the following theorem, generalising Proposition 6 in \cite{ambrozio2015}:

\begin{thmA} \label{inf rigidity intro}
Let $(M^3,g)$ be a Riemannian $3$-manifold with nonempty boundary, and assume that $R_M$ and $H_{\partial M}$ are bounded from below. If  $\Sigma$ is a compact two sided capillary stable minimal surface, immersed in $M$ with contact angle $\theta \in (0,\pi)$, then
\begin{align*}
\ind_\theta(\Sigma) :=  \frac{1}{2} \inf_M R_M \vert \Sigma \vert + \frac{1}{\sin \theta} \inf_{\partial M} H_{\partial M} \vert \partial \Sigma \vert \leq2 \pi \chi(\Sigma),
\end{align*}
where $\chi(\Sigma)$ denotes the Euler characteristic of $\Sigma$. Moreover, equality occurs if and only if $\Sigma$ satisfies the following properties:
\begin{itemize}
\item[(i)] $\Sigma$ is totally geodesic in $M$ and the geodesic curvature of $\partial \Sigma$ in $\partial M$ is equal to $(\cot \theta) \inf H_{\partial M}$;
\item[(ii)] the scalar curvature $R_M$ is constant along $\Sigma$ and equal to $\inf R_M$, and the mean curvature $H_{\partial M}$ is constant along $\partial \Sigma$ and equal to $\inf H_{\partial M}$;
\item[(iii)] $\Ric(N) = 0$ and $\II(\overline{\nu}, \overline{\nu}) = 0$, where $N$ is a unit normal for $\Sigma$ and $\II(\overline{\nu}, \overline{\nu})$ denotes the second fundamental form of $\partial M$ in the direction of a unit conormal for $\partial \Sigma$ in $\partial M$.
\end{itemize}
In particular, (i), (ii) and (iii) imply that $\Sigma$ has constant Gaussian curvature equal to $\inf R_M/2$ and $\partial \Sigma$ has constant geodesic curvature equal to $\inf H_{\partial M} / \sin \theta$.
\end{thmA}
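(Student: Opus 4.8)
The plan is to test the capillary stability inequality against the constant function and then trade the curvature integrand for Gaussian curvature so that Gauss--Bonnet applies. Recall from Section~\ref{preliminaries_capillary} that stability of $\Sigma$ means
\begin{equation*}
Q(\phi,\phi) = \int_\Sigma \bigl(|\nabla\phi|^2 - (|A|^2 + \Ric(N))\phi^2\bigr)\,dA - \int_{\partial\Sigma} q\,\phi^2\,ds \geq 0
\end{equation*}
for every $\phi \in C^\infty(\Sigma)$, where $A$ is the second fundamental form of $\Sigma$ and $q = \frac{1}{\sin\theta}\II(\overline{\nu},\overline{\nu}) - \cot\theta\,A(\nu,\nu)$ is the boundary integrand produced by the second variation, $\nu$ denoting the outward unit conormal of $\partial\Sigma$ in $\Sigma$. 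Since $\Sigma$ is minimal there is no enclosed-volume constraint, so $\phi \equiv 1$ is admissible; inserting it gives
\begin{equation*}
\int_\Sigma (|A|^2 + \Ric(N))\,dA + \int_{\partial\Sigma} q\,ds \leq 0.
\end{equation*}

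Next I would rewrite the bulk integrand using the Gauss equation for the minimal surface $\Sigma \subset M$, namely $|A|^2 + \Ric(N) = \frac{1}{2} R_M - K + \frac{1}{2}|A|^2$ with $K$ the Gaussian curvature of $\Sigma$, and eliminate $\int_\Sigma K$ through Gauss--Bonnet, $\int_\Sigma K\,dA = 2\pi\chi(\Sigma) - \int_{\partial\Sigma}\kappa_g\,ds$, where $\kappa_g$ is the geodesic curvature of $\partial\Sigma$ in $\Sigma$. The previous inequality then becomes
\begin{equation*}
\frac{1}{2}\int_\Sigma R_M\,dA + \frac{1}{2}\int_\Sigma |A|^2\,dA + \int_{\partial\Sigma}(q + \kappa_g)\,ds \leq 2\pi\chi(\Sigma).
\end{equation*}

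The heart of the argument is the pointwise boundary identity $q + \kappa_g = \frac{1}{\sin\theta}H_{\partial M}$ along $\partial\Sigma$, and I expect this to be the main obstacle. To prove it I would work with the two orthonormal frames $\{N,\nu\}$ and $\{\overline{N},\overline{\nu}\}$ of the normal bundle of $\partial\Sigma$ in $M$, which are related by the contact angle $\theta$, and decompose the curvature vector $\nabla_\tau\tau$ of the unit tangent $\tau$ of $\partial\Sigma$ in each frame. This expresses $\kappa_g$ and the geodesic curvature $\kappa_g^{\partial M}$ of $\partial\Sigma$ in $\partial M$ through $A(\tau,\tau)$ and $\II(\tau,\tau)$; substituting $q$ and using minimality $A(\nu,\nu) = -A(\tau,\tau)$, the terms containing $A(\tau,\tau)$ and the off-angle contributions cancel, collapsing the sum to $\frac{1}{\sin\theta}\bigl(\II(\tau,\tau) + \II(\overline{\nu},\overline{\nu})\bigr) = \frac{1}{\sin\theta}H_{\partial M}$ (this specialises to the free boundary identity $q+\kappa_g = H_{\partial M}$ of Proposition 6 in \cite{ambrozio2015} when $\theta = \pi/2$, where $\overline{\nu} = N$). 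Granting the identity, the inequality reads
\begin{equation*}
\frac{1}{2}\int_\Sigma R_M\,dA + \frac{1}{2}\int_\Sigma |A|^2\,dA + \frac{1}{\sin\theta}\int_{\partial\Sigma}H_{\partial M}\,ds \leq 2\pi\chi(\Sigma),
\end{equation*}
and bounding $\int_\Sigma R_M \geq (\inf_M R_M)|\Sigma|$, $\int_{\partial\Sigma}H_{\partial M} \geq (\inf_{\partial M}H_{\partial M})|\partial\Sigma|$ (both finite by hypothesis) while discarding $\frac{1}{2}\int_\Sigma|A|^2 \geq 0$ yields $\ind_\theta(\Sigma) \leq 2\pi\chi(\Sigma)$.

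For the rigidity statement I would trace back the equalities. Equality forces $\int_\Sigma|A|^2 = 0$, so $\Sigma$ is totally geodesic, and forces $R_M \equiv \inf_M R_M$ on $\Sigma$ and $H_{\partial M}\equiv\inf_{\partial M}H_{\partial M}$ on $\partial\Sigma$, which is (ii). Moreover $\phi\equiv1$ then minimises $Q$, hence is a Jacobi field: its Euler--Lagrange equations are the Jacobi equation $|A|^2+\Ric(N)=0$ in the interior and the Robin condition $q=0$ on $\partial\Sigma$. Since $A\equiv0$, these give $\Ric(N)=0$ and $\II(\overline{\nu},\overline{\nu})=0$, which is (iii). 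The first part of (i) is the total geodesy just obtained; for the second part, $\II(\overline{\nu},\overline{\nu})=0$ makes $H_{\partial M} = \II(\tau,\tau)$, and the frame identities with $A(\tau,\tau)=0$ turn $\kappa_g^{\partial M}$ into $(\cot\theta)\II(\tau,\tau) = (\cot\theta)\inf_{\partial M}H_{\partial M}$, matching (i). The concluding remarks then follow at once: the Gauss equation with $A\equiv0$ and $\Ric(N)=0$ gives $K = R_M/2 = \inf_M R_M/2$, while the frame identity for $\kappa_g$ gives $\kappa_g = \frac{1}{\sin\theta}\inf_{\partial M}H_{\partial M}$. The converse direction is immediate, since substituting (i)--(iii) makes every inequality in the chain an equality.
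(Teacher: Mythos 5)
Your proposal follows the paper's proof essentially line for line: test the stability inequality with $\phi\equiv1$, rewrite $\Ric(N)+\norm{A}^2$ via the traced Gauss equation, apply Gauss--Bonnet, absorb the boundary terms through the identity $q+\kappa_g=\frac{1}{\sin\theta}H_{\partial M}$ (which is exactly the paper's Lemma~\ref{mean curvature identity} with $H=0$), and handle rigidity by the same argument that $\phi\equiv1$ minimises $Q$ and hence satisfies the Jacobi equation with the Robin condition. The only slip is the sign in your displayed $q$: under the paper's conventions the second-variation boundary term is $q=\frac{1}{\sin\theta}\II(\overline{\nu},\overline{\nu})+(\cot\theta)A(\nu,\nu)$, and it is with this sign that the cancellation you describe actually produces the identity you state.
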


A compact two-sided capillary minimal surface, properly embedded in $(M,g)$ with contact angle $\theta \in (0, \pi)$, that satisfies conditions (i), (ii) and (iii) of Theorem \ref{inf rigidity intro} will be called \textit{infinitesimally rigid}. Given one such surface $\Sigma$, there is a way to obtain a vector field $Z$ in $\Sigma$ such that $g(Z,N) = 1$ and $Z$ is tangent to $\partial M$ along $\partial \Sigma$. Let us also denote by $Z$ an extension to $M$ which is tangent to $\partial M$ along the entire boundary of  $M$. Let $\phi = \phi(x,t)$ the local flow of  $Z$ and fix a number $\alpha$ between $0$ and $1$. We show the existence of a local foliation around $\Sigma$, employing the same techniques as Ambrozio in Proposition 10 in \cite{ambrozio2015}:

\begin{thmA} \label{local foliation intro}
Let $(M^3,g)$ be a Riemannian $3$-manifold with nonempty boundary, and assume that $R_M$ and $H_{\partial M}$ are bounded from below. Let $\Sigma$ be a compact two-sided capillary minimal surface, properly embedded in $M$ with contact angle $\theta \in (0,\pi)$. If  $\Sigma$ is infinitesimally rigid, then there exists $\varepsilon > 0$ and a map $w \in C^{2,\alpha}(\Sigma \times (-\varepsilon, \varepsilon))$ such that for every $t \in (-\varepsilon, \varepsilon)$, the set
\begin{align*}
\Sigma_t = \{ \phi(x,w(x,t)) : x \in \Sigma \}
\end{align*}
is a capillary CMC surface with contact angle $\theta$ and mean curvature $H(t)$. Moreover, for each $x \in \Sigma$ and	$t \in (-\varepsilon, \varepsilon)$,
\begin{align*}
w(x,0) = 0, \quad \int_\Sigma (w(\cdot, t) - t) \, \mathrm{d} A = 0, \quad \text{and} \quad \frac{\partial w}{\partial t}(x,0) = 1.
\end{align*}
In particular, taking a smaller $\varepsilon > 0$ if necessary, $\{ \Sigma_t \}_{t \in (-\varepsilon, \varepsilon)}$ is a capillary CMC foliation of a neighbourhood of $\Sigma$ in $M$.
\end{thmA}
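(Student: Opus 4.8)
The plan is to produce the foliation by solving the capillary CMC equation in a neighbourhood of $\Sigma$ with a Banach-space implicit function theorem, in the spirit of Proposition 10 of \cite{ambrozio2015}, the new ingredient being the bookkeeping of the contact-angle boundary condition. Fix $\alpha \in (0,1)$. For $u \in C^{2,\alpha}(\Sigma)$ of small norm, the flow of $Z$ produces the normal graph $\Sigma_u = \{ \phi(x, u(x)) : x \in \Sigma \}$; since $Z$ is tangent to $\partial M$ along the whole boundary, each $\Sigma_u$ is a properly embedded surface with $\partial \Sigma_u \subset \partial M$. Write $H(\Sigma_u) \in C^{0,\alpha}(\Sigma)$ for its mean curvature and $\Theta(\Sigma_u) \in C^{1,\alpha}(\partial \Sigma)$ for the contact angle it makes with $\partial M$ (both pulled back to $\Sigma$ through the graph map). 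I would then define
\begin{align*}
\Phi(u,h,t) = \Big( H(\Sigma_u) - h, \ \Theta(\Sigma_u) - \theta, \ \tfrac{1}{|\Sigma|}\textstyle\int_\Sigma u \, \mathrm{d}A - t \Big)
\end{align*}
as a map from a neighbourhood of $(0,0,0)$ in $C^{2,\alpha}(\Sigma) \times \mathbb{R} \times \mathbb{R}$ into $C^{0,\alpha}(\Sigma) \times C^{1,\alpha}(\partial \Sigma) \times \mathbb{R}$, and solve $\Phi(u,h,t) = 0$ for $(u,h)$ as a function of $t$. Since $\Sigma$ is minimal and capillary of angle $\theta$, we have $\Phi(0,0,0) = 0$.

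The heart of the argument is the computation of the partial differential $D_{(u,h)}\Phi(0,0,0)$, where the infinitesimal rigidity of $\Sigma$ enters decisively. Because $g(Z,N) = 1$ and $H \equiv 0$ on $\Sigma$, the first variation of the mean curvature in a direction $v \in C^{2,\alpha}(\Sigma)$ is the Jacobi operator $Lv = \Delta v + (|A|^2 + \Ric(N,N))v$; by conditions (i) and (iii) of Theorem \ref{inf rigidity intro}, $A \equiv 0$ and $\Ric(N,N) = 0$, so $Lv = \Delta v$. Likewise, the linearisation of the capillary boundary operator $\Theta - \theta$ is (after dividing by a nonzero factor depending on $\theta$) a Robin operator $\frac{\partial v}{\partial \nu} - q\,v$ whose zeroth-order coefficient $q$ is assembled from $\II(\overline{\nu},\overline{\nu})$, the second fundamental form $A(\nu,\nu)$ of $\Sigma$ and the geodesic curvature of $\partial \Sigma$; conditions (i) and (iii) force $q = 0$, leaving the Neumann condition $\frac{\partial v}{\partial \nu} = 0$. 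Hence
\begin{align*}
D_{(u,h)}\Phi(0,0,0)(v,k) = \Big( \Delta v - k, \ \tfrac{\partial v}{\partial \nu}, \ \tfrac{1}{|\Sigma|}\textstyle\int_\Sigma v \, \mathrm{d}A \Big).
\end{align*}

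This linear map is an isomorphism onto $C^{0,\alpha}(\Sigma) \times C^{1,\alpha}(\partial \Sigma) \times \mathbb{R}$. For injectivity, $(\Delta v - k, \partial v/\partial \nu, \int_\Sigma v) = 0$ gives, after integrating $\Delta v = k$ against the Neumann condition, $k|\Sigma| = 0$, whence $v$ is harmonic with vanishing Neumann data, i.e. a constant, and $\int_\Sigma v = 0$ forces $v \equiv 0$. For surjectivity, given $(f,g,a)$ one is led to set $k = \frac{1}{|\Sigma|}(\int_{\partial \Sigma} g - \int_\Sigma f)$, which is precisely the compatibility condition making the Neumann problem $\Delta v = f + k$, $\partial v/\partial \nu = g$ solvable (standard Schauder theory); the remaining additive constant is then fixed by $\frac{1}{|\Sigma|}\int_\Sigma v = a$. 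The implicit function theorem now yields $\varepsilon > 0$ and maps $t \mapsto w(\cdot, t) \in C^{2,\alpha}(\Sigma)$ and $t \mapsto H(t) \in \mathbb{R}$ with $\Phi(w(\cdot,t), H(t), t) = 0$, as regular in $t$ as $\Phi$ permits (so that $w \in C^{2,\alpha}(\Sigma \times (-\varepsilon,\varepsilon))$), with $w(\cdot,0) = 0$ and $H(0) = 0$ by uniqueness; the identity $\int_\Sigma (w(\cdot,t) - t)\,\mathrm{d}A = 0$ is the third equation. Differentiating $\Phi(w(\cdot,t), H(t), t) = 0$ at $t = 0$ and writing $v = \partial_t w(\cdot,0)$, $k = H'(0)$ gives $\Delta v = k$, $\partial v/\partial \nu = 0$, $\frac{1}{|\Sigma|}\int_\Sigma v = 1$; as above this forces $k = 0$ and $v \equiv 1$, i.e. $\partial w/\partial t(x,0) = 1$.

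Finally, because $\partial w / \partial t (\cdot, 0) \equiv 1$, the map $(x,t) \mapsto \phi(x, w(x,t))$ has $t$-derivative $Z$ along $\Sigma$, which is transverse to $\Sigma$ since $g(Z,N) = 1$; thus it is a local diffeomorphism near $\Sigma \times \{0\}$, and after shrinking $\varepsilon$ the surfaces $\Sigma_t$ are pairwise disjoint and sweep out a neighbourhood of $\Sigma$, giving the desired capillary CMC foliation. I expect the main obstacle to be the boundary analysis: verifying that $\Theta$ is a well-defined $C^{1,\alpha}(\partial \Sigma)$-valued operator on graphs, computing its linearisation precisely enough to identify the Robin coefficient $q$, and confirming that the infinitesimal-rigidity hypotheses make $q$ vanish so that the boundary operator degenerates to the Neumann condition --- this is exactly what promotes the linearised problem to the Fredholm, index-zero setting in which the implicit function theorem applies. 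Secondary care is needed to ensure that the flow of the extended field $Z$ keeps $\partial \Sigma_u$ on $\partial M$ and that all the estimates are uniform for $u$ near $0$.
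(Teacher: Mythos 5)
Your proposal is correct and follows essentially the same route as the paper: an implicit function theorem applied to the pair (mean curvature, contact angle) of normal graphs along the flow of $Z$, with infinitesimal rigidity collapsing the linearisation to the Laplacian with Neumann boundary condition. The only difference is cosmetic bookkeeping --- you augment the system with the unknown constant $h$ and the average constraint $\tfrac{1}{|\Sigma|}\int_\Sigma u\,\mathrm{d}A = t$, whereas the paper works on the subspace of mean-zero functions, subtracts the mean of $H_{u+t}$, and parametrises the graphs by $u+t$; the two formulations are equivalent and lead to the same Fredholm analysis.
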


It is also possible to show the existence of a capillary \textit{minimal} foliation, where the contact angles now vary from leaf to leaf. Although it will not be used subsequently, we believe it may be of independent interest (see Remark \ref{variable angle}).

\begin{thmA} \label{local foliation 2 intro}
Let $(M^3,g)$ be a Riemannian $3$-manifold with nonempty boundary, and assume that $R_M$ and $H_{\partial M}$ are bounded from below. Let $\Sigma$ be a compact two-sided capillary minimal surface, properly embedded in $M$ with contact angle $\theta \in (0,\pi)$. If  $\Sigma$ is infinitesimally rigid, then there exists $\varepsilon > 0$ and a map $w \in C^{2,\alpha}(\Sigma \times (-\varepsilon, \varepsilon))$ such that for every $t \in (-\varepsilon, \varepsilon)$, the set
\begin{align*}
\Sigma_t = \{ \phi(x,w(x,t)) : x \in \Sigma \}
\end{align*}
is a capillary minimal surface with contact angle $\theta(t) \in (0, \pi)$. Moreover, for each $x \in \Sigma$ and $t \in (-\varepsilon, \varepsilon)$,
\begin{align*}
w(x,0) = 0, \quad \int_\Sigma (w(\cdot, t) - t) \, \mathrm{d} A = 0, \quad \text{and} \quad \frac{\partial w}{\partial t}(x,0) = 1.
\end{align*}
In particular, taking a smaller $\varepsilon > 0$ if necessary, $\{ \Sigma_t \}_{t \in (-\varepsilon, \varepsilon)}$ is a capillary minimal foliation of a neighbourhood of $\Sigma$ in $M$.
\end{thmA}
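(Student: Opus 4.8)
The plan is to realise the leaves as graphs over $\Sigma$ via the flow $\phi$ and to solve, by the implicit function theorem in Hölder spaces, the coupled system ``minimal $+$ constant contact angle'', in close parallel to the proof of Theorem~\ref{local foliation intro} but with the roles of the mean curvature and the angle interchanged. For $u \in C^{2,\alpha}(\Sigma)$ put $\Sigma_u = \{\phi(x,u(x)) : x \in \Sigma\}$; since $Z$ is tangent to $\partial M$, the flow preserves $\partial M$, so $\partial \Sigma_u \subset \partial M$ for every $u$ and the only conditions to impose are on the mean curvature and on the contact angle. Write $H(u) \in C^{0,\alpha}(\Sigma)$ for the mean curvature of $\Sigma_u$ pulled back to $\Sigma$ and $\vartheta(u) \in C^{1,\alpha}(\partial\Sigma)$ for its contact angle function, so that $H(0) = 0$ and $\vartheta(0) \equiv \theta$. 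First I would record that both maps are of class $C^1$ between the indicated spaces (standard, as they are the nonlinear mean-curvature and angle operators of a graph) and compute their linearisations at $u=0$. The derivative of $H$ is the Jacobi operator $\Delta_\Sigma + |A|^2 + \Ric(N,N)$, which by infinitesimal rigidity (conditions (i) and (iii)) reduces to $\Delta_\Sigma$; the derivative of $\vartheta$ is a boundary operator whose zeroth-order terms involve $\II(\overline{\nu},\overline{\nu})$ and the second fundamental form of $\Sigma$, and which therefore reduces, again by (i) and (iii), to $v \mapsto \tfrac{1}{\sin\theta}\,\partial v/\partial\nu$, where $\nu$ is the outward conormal of $\partial\Sigma$ in $\Sigma$.

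To produce a \emph{minimal} foliation with free angle, I would solve $H(u)=0$ together with ``$\vartheta(u)$ is constant''. Introducing the space $C^{1,\alpha}_*(\partial\Sigma)$ of functions of zero mean and the augmented map
\[
\mathcal{G}(u,t) = \Big( H(u),\ \vartheta(u) - \tfrac{1}{|\partial\Sigma|}\!\int_{\partial\Sigma}\!\vartheta(u)\,\mathrm{d}s,\ \int_\Sigma (u-t)\,\mathrm{d}A \Big)
\]
with values in $C^{0,\alpha}(\Sigma)\times C^{1,\alpha}_*(\partial\Sigma)\times\mathbb{R}$, the equation $\mathcal{G}(u,t)=0$ encodes exactly that $\Sigma_u$ is minimal, meets $\partial M$ at a constant angle, and is normalised by $\int_\Sigma u\,\mathrm{d}A = t\,|\Sigma|$. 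The partial derivative sends $v$ to $\big(\Delta_\Sigma v,\ \tfrac{1}{\sin\theta}(\partial v/\partial\nu - \overline{\partial v/\partial\nu}),\ \int_\Sigma v\,\mathrm{d}A\big)$, and the key step is to show this is a Banach-space isomorphism. Green's identity forces any element of the kernel of the first two components to satisfy $\partial v/\partial\nu = 0$ and $\Delta_\Sigma v = 0$, hence to be constant, and the last component kills that constant, giving injectivity; surjectivity follows from solvability of the Neumann problem $\Delta_\Sigma v = f$, $\partial v/\partial\nu = \sin\theta\,h + c$ upon choosing $c = |\partial\Sigma|^{-1}\int_\Sigma f$ to meet the compatibility condition, the remaining freedom being fixed by the $\mathbb{R}$-component.

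With the isomorphism in hand, the implicit function theorem yields $\varepsilon>0$ and a curve $t\mapsto w(\cdot,t)\in C^{2,\alpha}(\Sigma)$ with $\mathcal{G}(w(\cdot,t),t)=0$ and $w(\cdot,0)=0$, which elliptic regularity upgrades to $w\in C^{2,\alpha}(\Sigma\times(-\varepsilon,\varepsilon))$. The normalisation $\int_\Sigma(w(\cdot,t)-t)\,\mathrm{d}A=0$ is built into $\mathcal{G}$, and differentiating $\mathcal{G}(w(\cdot,t),t)=0$ at $t=0$ shows $\dot w:=\partial_t w(\cdot,0)$ is annihilated by $\Delta_\Sigma$ and by $\partial_\nu$ and satisfies $\int_\Sigma(\dot w - 1)\,\mathrm{d}A=0$, forcing $\dot w\equiv 1$. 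Setting $\theta(t)=|\partial\Sigma|^{-1}\int_{\partial\Sigma}\vartheta(w(\cdot,t))\,\mathrm{d}s$ gives the constant contact angle of $\Sigma_t$, with $\theta(0)=\theta$, hence $\theta(t)\in(0,\pi)$ for $\varepsilon$ small. Finally $\partial_t|_{0}\phi(x,w(x,t)) = \dot w(x)\,Z(x) = Z(x)$ is transverse to $\Sigma$ because $g(Z,N)=1$, so after shrinking $\varepsilon$ the map $(x,t)\mapsto\phi(x,w(x,t))$ is a diffeomorphism onto a neighbourhood of $\Sigma$ and $\{\Sigma_t\}$ is a foliation. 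The main obstacle I anticipate is the honest computation of the linearised contact-angle operator and the verification that its zeroth-order part is annihilated by the rigidity hypotheses — this is the genuinely capillary input, absent in the free-boundary case — together with checking the $C^1$-regularity of $u\mapsto\vartheta(u)$ so that the implicit function theorem applies.
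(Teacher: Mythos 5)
Your proposal follows essentially the same route as the paper: graphs over $\Sigma$ via the flow of $Z$, linearisations of the mean-curvature and contact-angle operators computed from Lemma \ref{variation of mean curvature} and reduced by infinitesimal rigidity to $\Delta_\Sigma$ and a nonzero multiple of $\partial v/\partial \nu$, and the implicit function theorem applied to the map imposing $H\equiv 0$ together with constancy of the angle --- the paper's $\Lambda(u,t)$ is your $\mathcal{G}$ with the cosine of the angle in place of the angle itself and with the normalisation $\int_\Sigma u\,\mathrm{d}A=0$ built into the domain $E$ rather than appended as a scalar equation. The only (immaterial) discrepancy is the constant in your linearised angle operator: differentiating $g(N_u,\overline{N}_u)=\cos\vartheta(u)$ gives $-(\sin\theta)\,\partial v/\partial\nu$ under rigidity, hence $D\vartheta(v)=\partial v/\partial\nu$ rather than $\tfrac{1}{\sin\theta}\,\partial v/\partial\nu$, which changes nothing in the isomorphism argument.
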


Then, we use Theorem \ref{local foliation intro} to show that, under some hypotheses, a dichotomy occurs: either the contact angle is equal to $\pi/2$ or a very special situation takes place. More precisely, we have:

\begin{thmA} \label{rigidity_capillary intro}
Let $(M^3,g)$ be a Riemannian $3$-manifold with nonempty and weakly mean-convex boundary, and assume that $R_M$ is bounded from below. Let $\Sigma$ be an energy-minimising and infinitesimally rigid surface, properly embedded in $M$ with contact angle $\theta \in (0,\pi)$. Assume that one of the following hypothesis holds:
\begin{itemize}
\item[(a)] each component of $\partial \Sigma$ is locally length-minimising in $\partial M$; or
\item[(b)] $\inf_{\partial M} H_{\partial M} = 0$.
\end{itemize}
Then either $\theta = \pi/2$ or $\Sigma$ is a flat and totally geodesic cylinder, $M$ is flat and $\partial M$ is totally geodesic around $\Sigma$. In the first case, there is a neighbourhood of $\Sigma$ in $M$ that is isometric to $(\Sigma \times (-\varepsilon, \varepsilon), g_\Sigma + \mathrm{d}t^2)$, where $(\Sigma, g_\Sigma)$ has constant Gaussian curvature $\inf R_M/2$ and $\partial \Sigma$ has constant geodesic curvature $\inf H_{\partial M}$ in $\Sigma$. 
\end{thmA}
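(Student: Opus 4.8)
The plan is to feed the infinitesimal rigidity data of Theorem~\ref{inf rigidity intro} into the capillary CMC foliation produced by Theorem~\ref{local foliation intro} and then run a maximum principle argument, in the spirit of Ambrozio's free boundary rigidity.

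\textbf{Reduction of the boundary data.} First I would use the hypotheses to normalise $\inf_{\partial M} H_{\partial M}$. By Theorem~\ref{inf rigidity intro}(i) the geodesic curvature of $\partial\Sigma$ in $\partial M$ equals $(\cot\theta)\inf_{\partial M} H_{\partial M}$. Under hypothesis (a), each component of $\partial\Sigma$ is locally length-minimising, hence a geodesic, in $\partial M$, so this curvature vanishes and $(\cot\theta)\inf H_{\partial M}=0$; since $\cot\theta\neq0$ whenever $\theta\neq\pi/2$, this forces $\inf H_{\partial M}=0$. Under hypothesis (b) we have $\inf H_{\partial M}=0$ outright. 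Thus, away from the alternative $\theta=\pi/2$, I may assume $\inf_{\partial M}H_{\partial M}=0$, and then Theorem~\ref{inf rigidity intro} gives that $\partial\Sigma$ is geodesic both in $\Sigma$ and in $\partial M$, that $\II(\overline{\nu},\overline{\nu})\equiv0$, and that $\Sigma$ is totally geodesic with $\Ric(N)\equiv0$ and constant Gaussian curvature $\inf R_M/2$.

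\textbf{The mean curvature vanishes along the foliation.} Let $\{\Sigma_t\}$ be the foliation from Theorem~\ref{local foliation intro}, with mean curvature $H(t)$ and lapse $\rho_t=g(Z,N_t)>0$, so that $\rho_0\equiv1$ and $H(0)=0$. The first variation of the capillary energy $E$ along the foliation reads
\begin{align*}
E'(t)=-H(t)\int_{\Sigma_t}\rho_t\,\mathrm{d}A,
\end{align*}
the boundary contribution cancelling precisely because every leaf meets $\partial M$ at the fixed angle $\theta$. Since $\Sigma_0$ minimises $E$, the function $t\mapsto E(t)$ has a minimum at $t=0$; as the integral is positive, this forces $H(t)\ge0$ for $t\le0$ and $H(t)\le0$ for $t\ge0$. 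I would then feed this sign information into the strong maximum principle: writing the nearby leaves as normal graphs over an extremal leaf, the difference of the mean curvature operators gives a differential inequality whose solution is forced to be constant, where weak mean convexity of $\partial M$ is exactly what makes the boundary (Hopf) version of the comparison go through at the moving contact line. This yields $H\equiv0$, so every leaf is a capillary minimal surface.

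\textbf{Local splitting.} Once $H\equiv0$, the identity above gives $E(t)\equiv E(0)$, so every leaf is energy-minimising, and the lapse $\rho_t$ is a positive solution of $\Delta_{\Sigma_t}\rho_t+(|A_t|^2+\Ric(N_t))\rho_t=0$ with the homogeneous capillary Robin condition. Running the stability computation behind Theorem~\ref{inf rigidity intro} on each leaf, exactly as in \cite{ambrozio2015}, the positive Jacobi field forces $|A_t|^2+\Ric(N_t)\equiv0$ (so $A_t\equiv0$ and $\Ric(N_t)\equiv0$), the vanishing of the Robin coefficient (so $\II(\overline{\nu},\overline{\nu})\equiv0$ along $\partial\Sigma_t$), and $\rho_t$ constant in the fibre. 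Constant lapse together with totally geodesic leaves makes the metric a product $g_\Sigma+\mathrm{d}t^2$ near $\Sigma$, with $(\Sigma,g_\Sigma)$ of constant Gaussian curvature $\inf R_M/2$ and $\partial\Sigma$ of constant geodesic curvature $\inf H_{\partial M}/\sin\theta$. When $\theta=\pi/2$ this is exactly the first alternative of the statement.

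\textbf{The case $\theta\neq\pi/2$ and the main obstacle.} Here the reduction gives $\inf H_{\partial M}=0$, the product splitting of the previous step holds, and $\partial M$ is totally geodesic on the whole region swept by the $\partial\Sigma_t$. The height function $t$ restricted to $\partial M$ then has parallel gradient of constant length $\sin\theta\neq0$, so $\partial M$ is intrinsically flat; the Gauss equation for the totally geodesic $\partial M$ in the product gives $0=K_{\partial M}=\cos^2\theta\,K_\Sigma$, and since $\theta\neq\pi/2$ this forces $K_\Sigma\equiv0$. Hence $\inf R_M=0$, the ambient $M$ is flat and $\partial M$ is totally geodesic around $\Sigma$; Gauss--Bonnet with $K_\Sigma=0$ and vanishing geodesic curvature of $\partial\Sigma$ gives $\chi(\Sigma)=0$, so the two-sided $\Sigma$ is a flat totally geodesic cylinder, which is the second alternative. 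The delicate point throughout is the capillary (non-orthogonal) boundary analysis: arranging that the first-variation boundary terms cancel exactly, identifying the Robin coefficient and controlling its sign by the mean convexity of $\partial M$, and propagating the vanishing of $\II(\overline{\nu},\overline{\nu})$ to the moving contact line. This is the part with no analogue in the closed or free boundary settings, and it is precisely where the weak mean convexity and hypotheses (a)/(b) are essential.
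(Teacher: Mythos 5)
There is a genuine gap at the heart of your argument: the step asserting that $H(t)\equiv 0$ along the foliation. You claim that because $t\mapsto E(t)$ has a minimum at $t=0$ and $E'(t)=-H(t)\int_{\Sigma_t}\rho_t\,\mathrm{d}A_t$, the mean curvature must satisfy $H(t)\ge 0$ for $t\le 0$ and $H(t)\le 0$ for $t\ge 0$. A minimum of $E$ at $t=0$ gives $E(t)\ge E(0)$, which imposes no pointwise sign on $E'(t)$ for $t\neq 0$ ($E$ need not be monotone on either side of a minimum), so this inference is invalid. The subsequent appeal to a ``strong maximum principle between leaves written as normal graphs'' is not a proof either: there is no second surface to compare against and no touching point, so no comparison principle applies. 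Moreover, in the case $\theta\neq\pi/2$ you never establish that $\inf_M R_M=0$; you extract it at the very end from a product splitting that itself rests on the unproved claim $H\equiv 0$, and via a Gauss-equation identity $K_{\partial M}=\cos^2\theta\,K_\Sigma$ that is not justified.

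What the paper does instead, and what your proposal is missing, is the quantitative mechanism: starting from equations (\ref{variation equations 1})--(\ref{variation equations 2}), the Gauss equation, Gauss--Bonnet, Lemma \ref{mean curvature identity} and the rigidity identity $\ind_\theta(\Sigma)=2\pi\chi(\Sigma)$, one derives the integral differential inequality (\ref{differential inequality}) for $H'(t)$. Lemma \ref{symmetry lemma} (replacing $\theta$ by $\pi-\theta$) normalises the sign so that $R_0\cot\theta\ge 0$. Then energy-minimisation is used only through derivatives at $t=0$: first $H'(0)\ge 0$ together with $E'(s)=-H(s)\int\rho_s$ forces $H'(0)=0$, and then the computation $\alpha'(0)=\frac{\eta(0)}{\psi(0)}R_0\cot\theta$ shows $H''(0)>0$ unless $R_0=0$ or $\theta=\pi/2$, again contradicting minimisation. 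This is precisely the step that yields the dichotomy ``$\theta=\pi/2$ or $\inf_M R_M=0$''. Only after $R_0=0$ does the inequality reduce to $H'(t)\ge(\cot\theta)\vert\partial\Sigma_t\vert H(t)$, and a genuine ODE comparison (not the minimum of $E$ alone) gives the sign of $H$, hence $E(t)\le E(0)$, hence $H\equiv 0$; from there every leaf is infinitesimally rigid and the flatness of $M$ and total geodesy of $\partial M$ follow from the parallelism of $N_t$. Your reduction of the boundary data under hypotheses (a)/(b) is correct and matches the paper, but without the differential inequality and the second-order argument the proof does not close.
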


A situation when $\theta \neq \pi/2$ may happen in Theorem \ref{rigidity_capillary intro} is the following. Let $P_1$ and $P_2$ be two non parallel planes in $\mathbb{R}^3$, intersecting along a line $\ell$, and let $Q$ be a plane in $\mathbb{R}^3$ which is parallel to $\ell$ and intersects both $P_1$ and $P_2$ at the same angle. Fix $S_0$ to be the (closed) wedge determined by $P_1$ and $P_2$ which contains both $Q \cap P_1$ and $Q \cap P_2$, and let $S = S_0 \setminus \ell$ (see Figure \ref{planes}). Now fix $T$ a translation of $\mathbb{R}^3$ by a vector parallel to the line $\ell$ and let $M$ be the quotient of $S$ by the group $G$ generated by $T$. If we define $\Sigma$ to be the quotient of $Q \cap S$ by $G$, then $\Sigma$ is an infinitesimally rigid cylinder intersecting $\partial M$ at a constant angle, (a) and (b) hold, $M$ is flat and $\partial M$ is totally geodesic, as we wanted. One question remains: is $\Sigma$ energy-minimising? We believe so, but we did not find a proof.

\begin{figure}[h] 
\centering
\includegraphics[width = 5cm]{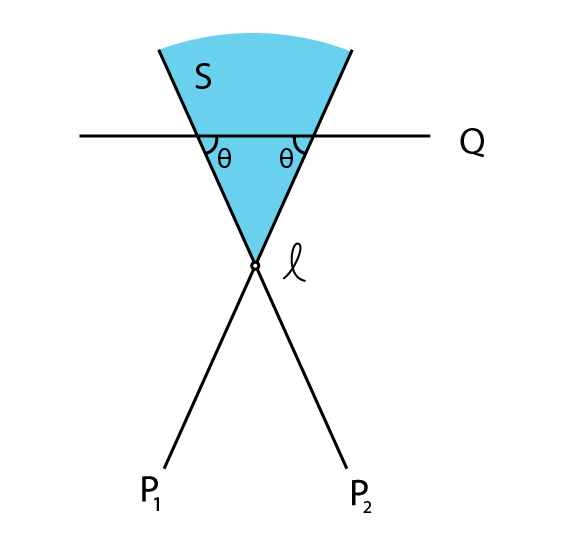}
\caption{An arrangement of $3$ planes in $\mathbb{R}^3$ seen from a plane orthogonal to the line $\ell$.}
\label{planes}
\end{figure} 

Next, we prove two other inequalities relating the geometry and the topology of capillary minimal surfaces of low index. This result generalises Theorem 1.2 in \cite{chen2014} to the capillary case. We note that item (i) below also generalises the free boundary case. 

\begin{thmA} \label{inequalities_fraser intro}
Let $(M^3,g)$ be a Riemannian $3$-manifold with nonempty boundary. Suppose that $\Sigma$ is a compact orientable two-sided capillary minimal surface of genus $g$ and with $k \geq 1$ boundary components, immersed in $M$ with contact angle $\theta \in (0, \pi)$. 
\begin{itemize}
\item[(i)] Suppose that $(M,g)$ has nonnegative Ricci curvature and weakly mean-convex boundary. If $\Sigma$ has index $1$ then
\begin{align*}
\int_{\partial \Sigma} k_g \, \mathrm{d} L < 2\pi \left[ 9 - (-1)^g - 2(g+k) \right].
\end{align*}
In particular, if the total geodesic curvature of $\partial \Sigma$ (in $\Sigma$) is nonnegative (which happens if $\Sigma$ is free boundary and $\partial M$ is weakly convex, for instance), then
\begin{itemize}
\item[(a)] $g + k \leq 3$ if $g$ is even;
\item[(b)] $g + k \leq 4$ if $g$ is odd.
\end{itemize} 
\item[(ii)] Suppose that the scalar curvature of $(M,g)$ and the mean curvature of $\partial M$ are bounded from below. If $\Sigma$ has index $1$, then
\begin{align*}
\ind_{\theta}(\Sigma) = \frac{1}{2} \inf_M R_M \vert \Sigma \vert + \frac{1}{\sin \theta} \inf_{\partial M} H_{\partial M} \vert \partial \Sigma \vert < 2 \pi\left[ 7 - (-1)^g - k \right].
\end{align*}
\item[(iii)] Suppose that $(M,g)$ has scalar curvature $R_M \geq R_0 > 0$ and weakly mean-convex boundary. 
\begin{itemize}
\item[(a)] If $\Sigma$ is stable, then it is a disc and $\vert \Sigma \vert \leq \frac{4\pi}{R_0}$.
\item[(b)] If $\Sigma$ has index $1$, then $\vert \Sigma \vert \leq \frac{4\pi \left[ 7 - (-1)^g - k \right]}{R_0}$.
\end{itemize}
\end{itemize}
\end{thmA}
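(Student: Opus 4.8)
The plan is to treat all three items with a single device: the second variation (index) form $Q$ of the capillary energy established in Section~\ref{preliminaries_capillary}. For $u\in C^\infty(\Sigma)$ it has the shape
\[
Q(u,u)=\int_\Sigma\bigl(|\nabla u|^2-(\Ric(N)+|A|^2)u^2\bigr)\,\mathrm{d}A-\int_{\partial\Sigma}q_\theta\,u^2\,\mathrm{d}L,
\]
where $|A|^2$ is the squared norm of the second fundamental form of $\Sigma$ and the boundary potential $q_\theta$ satisfies, along $\partial\Sigma$, the pointwise identity $k_g+q_\theta=H_{\partial M}/\sin\theta$, with $k_g$ the geodesic curvature of $\partial\Sigma$ in $\Sigma$ (the same identity that makes $u\equiv1$ reproduce Theorem~\ref{inf rigidity intro}). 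Having index $1$ means the associated Robin eigenvalue problem satisfies $\lambda_1<0\le\lambda_2$; its first eigenfunction $\varphi_1$ can be chosen positive, so every $u$ with $\int_\Sigma\varphi_1 u\,\mathrm{d}A=0$ satisfies $Q(u,u)\ge\lambda_2\lVert u\rVert_{L^2}^2\ge0$.

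Next I would build balanced conformal test functions. Endow $\Sigma$ with the conformal class of $g$ and cap off each of its $k$ boundary circles with a disc, producing a \emph{closed} Riemann surface $\hat\Sigma$ of genus $g$. By the Yang--Yau construction there is a nonconstant conformal branched cover $\Phi=(\phi^1,\phi^2,\phi^3)\colon\hat\Sigma\to S^2\subset\mathbb{R}^3$ of degree $n\le\lfloor(g+3)/2\rfloor$. Precomposing with a Möbius transformation of $S^2$ (Hersch's centre-of-mass argument applied to the positive measure $\varphi_1\,\mathrm{d}A$) I may assume $\int_\Sigma\varphi_1\phi^i\,\mathrm{d}A=0$ for $i=1,2,3$, so each $\phi^i$ is admissible in the eigenvalue sense above. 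Since $\sum_i(\phi^i)^2\equiv1$ and the Dirichlet energy is conformally invariant, while discarding the caps strictly lowers it, one gets $\int_\Sigma\sum_i|\nabla\phi^i|^2<8\pi n$. Summing $Q(\phi^i,\phi^i)\ge0$ over $i$ then yields the master inequality
\[
\int_\Sigma(\Ric(N)+|A|^2)\,\mathrm{d}A+\int_{\partial\Sigma}q_\theta\,\mathrm{d}L<8\pi n\le 8\pi\Bigl\lfloor\tfrac{g+3}{2}\Bigr\rfloor .
\]

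From here each item is bookkeeping with the Gauss equation and Gauss--Bonnet. For (ii) I substitute $\Ric(N)+|A|^2=\tfrac12R_M+\tfrac12|A|^2-K_\Sigma$, integrate, and use $\int_\Sigma K_\Sigma=2\pi\chi(\Sigma)-\int_{\partial\Sigma}k_g$ together with $k_g+q_\theta=H_{\partial M}/\sin\theta$ to cancel the $\int_{\partial\Sigma}k_g$ terms; bounding $R_M\ge\inf_M R_M$, $|A|^2\ge0$ and $H_{\partial M}\ge\inf_{\partial M}H_{\partial M}$ turns the left side into $\ind_\theta(\Sigma)-2\pi\chi(\Sigma)$, whence $\ind_\theta(\Sigma)<2\pi\chi(\Sigma)+8\pi\lfloor(g+3)/2\rfloor=2\pi[\,7-(-1)^g-k\,]$. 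For (i) I instead keep the geodesic curvature: writing $\int_\Sigma(\Ric(N)+|A|^2)=\int_\Sigma(\Ric(e_1)+\Ric(e_2))-4\pi\chi(\Sigma)+2\int_{\partial\Sigma}k_g$ for an orthonormal frame $\{e_1,e_2\}$ of $T\Sigma$ (Gauss equation for $|A|^2$ plus Gauss--Bonnet), nonnegative Ricci gives $\Ric(e_1)+\Ric(e_2)\ge0$ and weak mean-convexity gives $H_{\partial M}\ge0$; inserting this rewriting into the master inequality, cancelling one copy of $\int_{\partial\Sigma}k_g$ against the boundary term, and discarding the two nonnegative curvature integrals leaves $\int_{\partial\Sigma}k_g<8\pi n+4\pi\chi(\Sigma)=2\pi[\,9-(-1)^g-2(g+k)\,]$; the ``in particular'' follows since a nonnegative left side forces the right side to be positive. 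For (iii)(a) I do not need the test functions: stability lets me take $u\equiv1$ and the computation of Theorem~\ref{inf rigidity intro} gives $\tfrac12 R_0|\Sigma|\le 2\pi\chi(\Sigma)$, which forces $\chi(\Sigma)>0$ (hence $\Sigma$ is a disc) and $|\Sigma|\le 4\pi/R_0$. For (iii)(b) I feed the master inequality into the same scalar-curvature computation, discarding $|A|^2\ge0$ and $H_{\partial M}\ge0$, to obtain $\tfrac12 R_0|\Sigma|<2\pi\chi(\Sigma)+8\pi n$, i.e. $|\Sigma|\le 4\pi[\,7-(-1)^g-k\,]/R_0$.

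The routine part is the Gauss--Bonnet algebra; the \emph{main obstacle} is manufacturing the balanced test functions rigorously. Three points need care: that the first eigenfunction of the capillary Robin problem is positive, so that Hersch balancing against $\varphi_1\,\mathrm{d}A$ is legitimate; that capping off reproduces a closed surface of genus exactly $g$ (rather than the genus $2g+k-1$ of the double), which is what makes the Yang--Yau degree $\lfloor(g+3)/2\rfloor$ appear; and that restricting $\Phi$ from $\hat\Sigma$ to $\Sigma$ strictly decreases the Dirichlet energy, which supplies the strict inequalities. A final subtlety is controlling the sign of the boundary contribution under the weaker hypothesis $H_{\partial M}\ge0$ in (i), where I must retain $\int_{\partial\Sigma}k_g$ rather than cancel it as in (ii).
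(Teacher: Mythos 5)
Your proposal is correct and follows essentially the same route as the paper: the balanced conformal test functions of degree at most $\left[\frac{g+3}{2}\right]$ (the paper cites this as Lemma \ref{trick} from \cite{chen2014}, which you re-derive via Yang--Yau plus Hersch balancing against the first Robin eigenfunction), the resulting master inequality $\int_\Sigma(\Ric(N)+\norm{A}^2)\,\mathrm{d}A+\int_{\partial\Sigma}q\,\mathrm{d}L<8\pi\left[\frac{g+3}{2}\right]$, and the same Gauss equation/Gauss--Bonnet bookkeeping using the identity $q+k_g=H_{\partial M}/\sin\theta$ from Lemma \ref{mean curvature identity}. All the arithmetic checks out, including the conversion $8\pi\left[\frac{g+3}{2}\right]=2\pi\left(2g+5-(-1)^g\right)$ and the deduction of (iii)(a) from the stability inequality of Theorem \ref{inf rigidity intro}.
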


\begin{remark}
The scalar curvature in \cite{chen2014} is one half of ours. This is why a factor $4 \pi$ appears in item (iii) of Theorem \ref{inequalities_fraser intro} instead of the $2 \pi$ of Theorem 1.2 in \cite{chen2014}.
\end{remark}

\section*{Acknowledgements}

The author would like to thank Paolo Piccione for his constant support and encouragement during the period when this article was written and revised, and for countless fruitful mathematical conversations. He also expresses sincere gratitude to Lucas Ambrozio for valuable comments on this work. Additionally, the author thanks Izabella Freitas and Jackeline Conrado for the figures in this paper.

\section{Preliminaries: variational problem and stability} \label{preliminaries_capillary}

The purpose of this section is to formally introduce the concept of capillary CMC and minimal hypersurfaces. 
Despite the fact that in section \ref{capillary_results} we will be dealing only with capillary surfaces in Riemannian $3$-manifolds, there is no significant simplification in introducing the main concepts only in dimension $2$. So, the general situation will be addresed in the sequel. 

Let $(M^{n+1}, g)$ be a Riemannian manifold with nonempty boundary. Let $\Sigma^n$ be a smooth compact manifold with nonempty boundary, and let $\varphi : \Sigma \to M$ be a smooth immersion of $\Sigma$ into $M$. We say that $\varphi$ is a \textit{proper immersion} if $\varphi(\Sigma) \cap \partial M = \varphi(\partial \Sigma)$.

Henceforth, we assume that $\varphi$ is two-sided. Fix a unit normal vector field $N$ for $\Sigma$ along $\varphi$ and denote by $\nu$ the outward unit conormal for $\partial \Sigma$ in $\Sigma$. Moreover, let $\overline{N}$ the outward pointing unit normal for $\partial M$ and let $\overline{\nu}$ the unit normal for $\partial \Sigma$ in $\partial M$ such that the bases $\{N, \nu\}$ and $\{\overline{N}, \overline{\nu}\}$ determine the same orientation in $(T \partial \Sigma)^\perp$. See Figure \ref{cone} to gain some intuition.

A smooth function $\Phi : \Sigma \times (-\varepsilon, \varepsilon) \to M$ is called a \textit{proper variation} of $\varphi$ is the maps $\varphi_t : \Sigma \to M$, defined by $\varphi_t(x) = \Phi(x,t)$, are proper immersions for all $t \in (-\varepsilon, \varepsilon)$, and if $\varphi_0 = \varphi$.

Let us fix a proper variation $\Phi$ of $\varphi$. The \textit{variational vector field} associated to $\Phi$ is the vector field $\xi_\Phi = \xi : \Sigma \to TM $ along $\varphi$ defined by
\begin{align*}
	\xi(x) = \frac{\partial \Phi}{\partial t}(x,0), \quad x \in \Sigma.
\end{align*}

We now define some important functionals related to the variation $\Phi$. The \textit{area functional} $A : (-\varepsilon, \varepsilon) \to \mathbb{R}$ is given by
\begin{align*}
	A(t) = \int_\Sigma \mathrm{d} A_{\varphi_t^*g},
\end{align*}
where $\mathrm{d} A_{\varphi_t^*g}$ denotes the area element of $(\Sigma, \varphi_t^\ast g)$. Even if $n = \dim \Sigma > 2$, it is customary to refer to this as the area functional.

The \textit{volume functional} $V : (-\varepsilon, \varepsilon) \to \mathbb{R}$ is defined by
\begin{align*}
	V(t) = \int_{\Sigma \times [0,t]} \Phi^*(\mathrm{d} V),
\end{align*}
where $\mathrm{d} V$ is the volume element of $M$. We say that the variation $\Phi$ is \textit{volume preserving} if $V(t) = 0$ for every $t \in (-\varepsilon, \varepsilon)$. 

We also consider the \textit{wetting area functional} $W : (-\varepsilon, \varepsilon) \to \mathbb{R}$:
\begin{align*}
	W(t) = \int_{\partial \Sigma \times [0,t]} \Phi^*(\mathrm{d} A_{\partial M}),
\end{align*}
where $\mathrm{d} A_{\partial M}$ denotes the area element of $\partial M$.

Finally, we define the  \textit{energy functional}. In order to do so,  let us fix an angle $\theta \in (0, \pi)$. Then $E_{\Phi,\theta} = E : (-\varepsilon, \varepsilon) \to \mathbb{R}$ is given by
\begin{align*}
E(t) = A(t) - (\cos \theta) W(t).
\end{align*}

The following proposition contains the formulae for the first variation of the energy and volume, whose proof can be found in \cite{spivak} and \cite{barbosa1988}:
\begin{prop} \label{first variation}
Let $\Phi$ be a proper variation of the immersion $\varphi : \Sigma \to M$. Then the following formulae hold:
\begin{align*}
E'(0) &= - \int_{\Sigma} Hf \, \mathrm{d}A + \int_{\partial \Sigma} g(\xi, \nu - (\cos \theta) \overline{\nu}) \, \mathrm{d} L \quad  \text{ and } \\
V'(0) &= \int_\Sigma f \, \mathrm{d} A, 
\end{align*}
where $f = g(\xi,N)$, $H$ is the mean curvature of $\Sigma$ with respect to $N$, $\mathrm{d} A$ is the area element  of $\Sigma$ induced by $\varphi$ and $\mathrm{d} L$ is the line element of $\partial \Sigma$ induced by $\varphi$.
\end{prop}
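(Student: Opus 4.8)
The plan is to treat the three ingredients of the energy separately, since $E(t) = A(t) - (\cos\theta)W(t)$, and to establish the first variation of $A$, $W$ and $V$ one at a time. Throughout I would fix the $g$-orthogonal decomposition $\xi = \xi^\top + fN$ of the variational field along $\varphi$, where $\xi^\top$ is tangent to $\Sigma$ and $f = g(\xi,N)$, and record at the outset the constraint coming from properness: since $\varphi_t(\partial\Sigma)\subset\partial M$ for every $t$, the restriction $\xi|_{\partial\Sigma}$ is tangent to $\partial M$. This is what will let the wetting term be expressed through $\overline{\nu}$ alone.

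For the area functional I would start from the classical identity $A'(0) = \int_\Sigma \operatorname{div}_\Sigma \xi \, \mathrm{d}A$, obtained by differentiating the pulled-back area element $\mathrm{d}A_{\varphi_t^\ast g}$ at $t=0$. Splitting the divergence along the decomposition of $\xi$ gives $\operatorname{div}_\Sigma \xi = \operatorname{div}_\Sigma \xi^\top - Hf$, where the normal contribution uses $\operatorname{div}_\Sigma N = -H$ (this is precisely the sign convention under which $H$ is the mean curvature with respect to $N$ used in the statement). Applying the divergence theorem on the compact manifold-with-boundary $\Sigma$ to the tangential term, and noting $g(\xi^\top,\nu) = g(\xi,\nu)$ because $\nu$ is tangent to $\Sigma$, yields
\[
A'(0) = -\int_\Sigma Hf \, \mathrm{d}A + \int_{\partial\Sigma} g(\xi,\nu)\,\mathrm{d}L.
\]

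The wetting and volume terms I would handle by a single mechanism. Both $W(t)$ and $V(t)$ are integrals of a pulled-back top-degree form over a product region ($\partial\Sigma\times[0,t]$ and $\Sigma\times[0,t]$ respectively), so by Fubini and the fundamental theorem of calculus their derivatives at $t=0$ are the integrals over the $t=0$ slice of the contraction $\iota_{\partial_t}\Phi^\ast(\cdot)$. For the volume, contracting $\Phi^\ast(\mathrm{d}V)$ with $\partial_t$ and evaluating on a tangent frame of $\Sigma$ produces $\mathrm{d}V(\xi, \varphi_\ast e_1,\dots,\varphi_\ast e_n)$; the tangential part $\xi^\top$ drops out by antisymmetry and the normal part leaves $f\,\mathrm{d}A$, so $V'(0) = \int_\Sigma f\,\mathrm{d}A$. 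For the wetting area the same contraction, applied to $\Phi^\ast(\mathrm{d}A_{\partial M})$ along the curve $\partial\Sigma$ inside the surface $\partial M$, picks out the component of $\xi|_{\partial\Sigma}$ that is tangent to $\partial M$ and normal to $\partial\Sigma$, i.e. the $\overline{\nu}$-component, giving $W'(0) = \int_{\partial\Sigma} g(\xi,\overline{\nu})\,\mathrm{d}L$. Combining, $E'(0) = A'(0) - (\cos\theta)W'(0)$ collapses the two boundary integrals into $\int_{\partial\Sigma} g(\xi,\nu - (\cos\theta)\overline{\nu})\,\mathrm{d}L$, as claimed.

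The routine analytic steps (differentiating the area element, Fubini) are standard; the part demanding genuine care is the orientation and sign bookkeeping. Concretely, I must verify that the contraction for $W'(0)$ produces $+\overline{\nu}$ rather than $-\overline{\nu}$, which hinges on the convention fixed in the statement that $\{N,\nu\}$ and $\{\overline{N},\overline{\nu}\}$ induce the same orientation on $(T\partial\Sigma)^\perp$, together with the orientation of the swept band in $\partial M$; and likewise that the sign in $\operatorname{div}_\Sigma N = -H$ matches the stated mean-curvature convention. Getting these two sign choices mutually consistent is the only real obstacle, and once they are pinned down the formulae follow immediately.
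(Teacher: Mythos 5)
Your outline is correct and is essentially the standard argument; the paper itself does not prove Proposition \ref{first variation} but defers to \cite{spivak} and \cite{barbosa1988}, where the derivation proceeds exactly as you describe --- the divergence-theorem computation for $A'(0)$ with $\operatorname{div}_\Sigma \xi = \operatorname{div}_\Sigma \xi^\top - Hf$, and the contraction $\iota_{\partial_t}\Phi^\ast(\cdot)$ of the pulled-back top forms for $V'(0)$ and $W'(0)$, using properness to see that $\xi|_{\partial\Sigma}$ is tangent to $\partial M$. The only step you leave unexecuted is the sign and orientation bookkeeping for $W'(0)$, which you correctly identify as resting on the stated convention that $\{N,\nu\}$ and $\{\overline{N},\overline{\nu}\}$ induce the same orientation on $(T\partial\Sigma)^\perp$, so there is no gap in the plan.
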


We say that the immersion $\varphi$ is a \textit{capillary CMC immersion} if $E'(0) = 0$ for every volume preserving variation of $\varphi$. If $E'(0) = 0$ for \textit{every} variation of $\varphi$, we call $\varphi$ a capillary minimal immersion. When there is only one immersion under consideration and there is no risk of confusion, we just say that $\Sigma$ is a capillary CMC or minimal  hypersurface. See Figure \ref{cone} for an example in an Euclidean cone.

\begin{figure}[h] 
\centering
\includegraphics[width = 4.5cm]{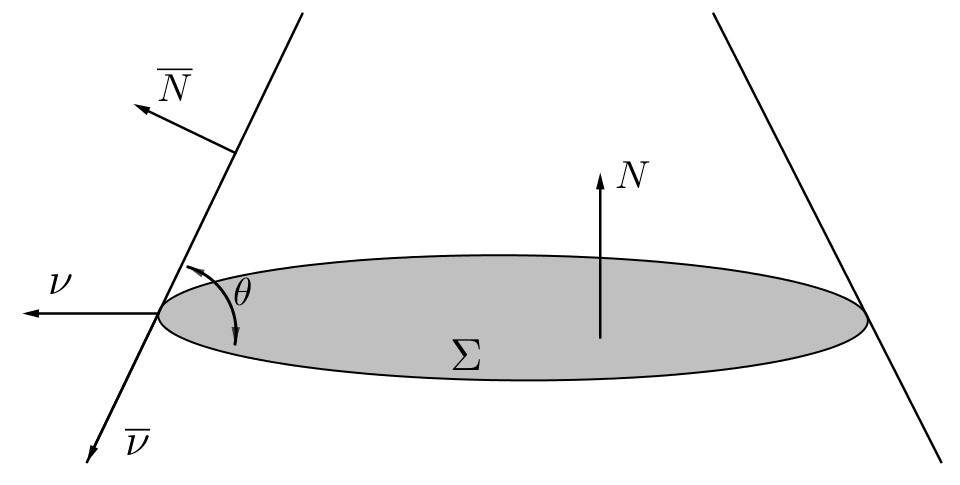}
\caption{The four fundamental vector fields in a capillary minimal surface in a cone.}
\label{cone}
\end{figure} 

Notice that $\Sigma$ is a capillary CMC hypersurface if and only if $\Sigma$ has constant mean curvature and $g(N,\overline{N}) = \cos \theta$ along $\partial \Sigma$; this last condition means that $\partial \Sigma$ meets $\partial M$ at an angle of $\theta$. Similarly, $\Sigma$ is a capillary minimal hypersurface when $\Sigma$ is a minimal hypersurface and $\partial \Sigma$ meets $\partial M$ at an angle of  $\theta$. When $\theta = \pi/2$, we use the term \textit{free boundary CMC}  (or \textit{minimal}) \textit{hypersurface}. 

For a capillary CMC or minimal hypersuface $\Sigma$ with contact angle $\theta \in (0, \pi)$, the orthonormal bases $\{N, \nu \}$ and $\{ \overline{N}, \overline{\nu} \}$ are related by the following equations:
\begin{align*}
\begin{cases}
\overline{N} = (\cos \theta) N + (\sin \theta) \nu \\
\overline{\nu} = -(\sin \theta) N + (\cos \theta) \nu
\end{cases}
\end{align*}
This will be important in the calculations.

\begin{prop}{\cite[Proposition 2.1]{ainouz2016}} \label{variation}
Let $\varphi : \Sigma^n \to M^{n+1}$ be a proper immersion which is transversal to $\partial M$. For any smooth function $f : \Sigma \to \mathbb{R}$ satisfying $\int_\Sigma f \, \mathrm{d} A = 0$, there exists a volume preserving variation $\Phi$ of $\varphi$ such that $f = g(\xi_\Phi, N)$, where $\xi = \frac{\partial \Phi}{\partial t} \vert_{t=0}$ is the variational vector field associated to $\Phi$. If we don't assume that $\int_\Sigma f \, \mathrm{d} A = 0$, then the result still holds, but now $\Phi$ doesn't need to be volume preserving.
\end{prop}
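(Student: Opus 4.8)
The plan is to split the proposition into two independent tasks. First I would produce, for an arbitrary $f$, a proper variation whose normal speed equals $f$; this already gives the last (unconstrained) assertion. Then, under the extra hypothesis $\int_\Sigma f\,\mathrm{d}A=0$, I would correct such a variation so that it becomes volume preserving without changing its normal speed at $t=0$. The key point throughout is that \emph{properness} of the variation (the requirement $\varphi_t(\partial\Sigma)\subset\partial M$) is equivalent to the variational field being tangent to $\partial M$ along $\partial\Sigma$, and that transversality is exactly what makes this tangency achievable while still prescribing the normal component $f$.

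The first step is to build a single field $\xi$ along $\varphi$ with $g(\xi,N)=f$ which is tangent to $\partial M$ along $\partial\Sigma$. At a point $p\in\partial\Sigma$, transversality means $N$ and $\overline{N}$ make an angle $\theta(p)\in(0,\pi)$, so $\sin\theta(p)\neq0$, and by compactness of $\partial\Sigma$ one has $\sin\theta\geq c>0$. Using the pointwise orthonormal relations
\[
N = \cos\theta\,\overline{N} - \sin\theta\,\overline{\nu}, \qquad \nu = \sin\theta\,\overline{N} + \cos\theta\,\overline{\nu},
\]
a combination $aN+b\nu$ lies in $T\partial M$ (i.e. has no $\overline{N}$ component) precisely when $a\cos\theta+b\sin\theta=0$. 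Hence, after extending $\nu$ and the angle $\theta$ to a collar of $\partial\Sigma$ in $\Sigma$ and choosing a cutoff $\chi$ equal to $1$ near $\partial\Sigma$ and $0$ away from it, the field $\xi := f\,N - \chi\,(\cot\theta)\,f\,\nu$ satisfies $g(\xi,N)=f$ everywhere and, since $\chi\equiv1$ on $\partial\Sigma$, is tangent to $\partial M$ there (indeed it equals $-(f/\sin\theta)\,\overline{\nu}$ on $\partial\Sigma$).

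Next I would realise $\xi$ by a proper variation. If $\varphi$ is an embedding one extends $\xi$ to an ambient field $\widetilde\xi$ that is tangent to $\partial M$ along all of $\partial M$; the two requirements ``$\widetilde\xi=\xi$ on $\varphi(\Sigma)$'' and ``$\widetilde\xi\in T\partial M$ on $\partial M$'' overlap only on $\varphi(\partial\Sigma)$, where they agree by the previous step, so such an extension exists by a partition-of-unity argument. Taking $\Phi(x,t)=\psi_t(\varphi(x))$ for $\psi_t$ the flow of $\widetilde\xi$, tangency of $\widetilde\xi$ to $\partial M$ makes $\psi_t$ preserve $\partial M$, so each $\varphi_t$ is proper and $\partial_t\Phi|_{t=0}=\xi$ gives $g(\xi_\Phi,N)=f$; the immersed case reduces to this by working locally, where $\varphi$ is an embedding, or via the ambient exponential map with the collar correction above. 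This proves the unconstrained statement. To obtain volume preservation when $\int_\Sigma f\,\mathrm{d}A=0$, I would apply the volume-correction argument of \cite{barbosa1988}: fix an ambient field $W$ tangent to $\partial M$ with $\int_\Sigma g(W,N)\,\mathrm{d}A\neq0$, let $\rho_s$ be its flow, and set $\Xi(x,t)=\rho_{\lambda(t)}(\Phi(x,t))$. The function $G(t,\lambda):=\Vol(\Xi(\cdot,t))$ satisfies $G(0,0)=0$ and $\partial_\lambda G(0,0)=\int_\Sigma g(W,N)\,\mathrm{d}A\neq0$, so the implicit function theorem furnishes a smooth $\lambda(t)$ with $\lambda(0)=0$ and $G(t,\lambda(t))\equiv0$. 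Differentiating this identity at $t=0$ and using $\partial_t G(0,0)=V'(0)=\int_\Sigma f\,\mathrm{d}A=0$ forces $\lambda'(0)=0$, whence $\partial_t\Xi|_{t=0}=\xi$ and the normal speed is still $f$; since $\rho_s$ preserves $\partial M$, $\Xi$ remains proper.

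I expect the main obstacle to be the \emph{simultaneous} enforcement of properness and volume preservation. Keeping the boundary on $\partial M$ rests entirely on the tangency of the variational field, which in turn uses transversality ($\sin\theta\neq0$) in the construction of $\xi$; the volume correction must then be carried out through a flow that itself preserves $\partial M$, and the reparametrisation must not disturb the prescribed normal speed — the vanishing $\lambda'(0)=0$ being precisely what reconciles these two demands. A secondary technical point is the passage from embeddings to immersions in the realisation step, which is handled by localising the construction where $\varphi$ is an embedding.
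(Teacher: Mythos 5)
The paper gives no proof of this proposition at all --- it is quoted directly from Ainouz--Souam \cite[Proposition 2.1]{ainouz2016} --- and your argument is correct and is essentially the standard one behind that reference: use transversality ($\sin\theta\geq c>0$) to correct $fN$ by a tangential term $-\chi(\cot\theta)f\nu$ so that the field becomes tangent to $\partial M$ along $\partial\Sigma$ without changing its normal component, realise it by a variation preserving $\partial M$, and then restore volume preservation via the Barbosa--do Carmo--Eschenburg implicit-function-theorem reparametrisation, where $\lambda'(0)=0$ follows exactly from $\int_\Sigma f\,\mathrm{d}A=0$. The only point worth tightening is the auxiliary field $W$ in the genuinely immersed case: rather than an ambient field (whose pullback integral could in principle cancel over several sheets), take $W$ to be a field along $\varphi$ built by the same tangential correction applied to $f\equiv 1$, so that $\int_\Sigma g(W,N)\,\mathrm{d}A=\vert\Sigma\vert\neq 0$ automatically.
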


We now discuss the notion of stability. Firstly, let us fix some notation. For a proper immersion $\varphi : \Sigma \to M$, let $A(X,Y) = g(-\nabla_X N, Y)$ the second fundamental form of  $\Sigma$, and denote by $\Delta_\Sigma$ the Laplacian of $\Sigma$ with respect to the metric induced by $\varphi$. Moreover, let $\II(v,w) = g(\nabla_v \overline{N}, w)$ be the second fundamental form of  $\partial M$ with respect to $-\overline{N}$.

The following proposition gives a formula for the second variation of energy. The proof is long and can be found in the Appendix of \cite{ros1997}.

\begin{prop}
Let $\varphi : \Sigma \to M$ be a capillary CMC  immersion and let $f : \Sigma \to \mathbb{R}$ be a smooth function which satisfies $\int_\Sigma f \, \mathrm{d} A = 0$. Let $\Phi$ be a volume preserving proper variation of $\varphi$ such that $f = g(\xi_{\Phi}, N)$ (see Proposition \ref{variation}). Then,
\begin{align*}
E''(0) &= -\int_\Sigma \left[ \Delta_\Sigma f + (\Ric(N)+\norm{A}^2)f \right]f \, \mathrm{d} A 
+ \int_{\partial \Sigma} \left( \frac{\partial f}{\partial \nu} - qf \right) f \, \mathrm{d} L \\
&= - \int_\Sigma f L_\Sigma(f) \, \mathrm{d} A +  \int_{\partial \Sigma} \left( \frac{\partial f}{\partial \nu} - qf \right) f \, \mathrm{d} L,
\end{align*}
where
\begin{align*}
q = \frac{1}{\sin \theta} \II(\overline{\nu}, \overline{\nu}) + (\cot \theta) A(\nu, \nu)
\end{align*}
and $L_\Sigma = \Delta_\Sigma + (\Ric(N) + \norm{A}^2)$ is the Jacobi operator of $\Sigma$. If $\varphi$ is a capillary minimal immersion and $f$ is any smooth function defined on $\Sigma$, then the same formula holds, but now $\Phi$ doesn't need to be volume preserving.
\end{prop}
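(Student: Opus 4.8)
The plan is to obtain the second variation by differentiating the first variation formula of Proposition \ref{first variation} once more in $t$. Applying that formula to the variation reparametrised from an arbitrary base time gives
\[
E'(t) = -\int_\Sigma H(t)\,u(t)\,\mathrm{d}A_t + \int_{\partial\Sigma} g\!\left(\tfrac{\partial\Phi}{\partial t},\,\nu_t-(\cos\theta)\overline{\nu}_t\right)\mathrm{d}L_t,
\]
where $u(t)=g(\tfrac{\partial\Phi}{\partial t},N_t)$ and $N_t,\nu_t,\overline{\nu}_t$ are the fields of Section \ref{preliminaries_capillary} attached to $\Sigma_t$. Differentiating at $t=0$ splits $E''(0)$ into an interior contribution and a boundary contribution, which I would treat separately.

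For the interior term I would use two facts. First, the first variation of the mean curvature under a variation of normal speed $f$ is $\tfrac{d}{dt}\big|_0 H(t)=L_\Sigma f+g(\nabla^\Sigma H,\xi^\top)$, and since $\varphi$ is CMC the tangential term vanishes, so $-\int_\Sigma \dot H(0)\,u(0)\,\mathrm{d}A=-\int_\Sigma (L_\Sigma f)f\,\mathrm{d}A$, which is exactly the claimed interior integrand. Second, the remaining piece $-\int_\Sigma H(0)\,\tfrac{d}{dt}\big|_0\!\big(u(t)\,\mathrm{d}A_t\big)$ equals $-H\,V''(0)$, because $\int_\Sigma u(t)\,\mathrm{d}A_t=V'(t)$ by the volume first-variation formula of Proposition \ref{first variation}; since the variation is volume preserving, $V\equiv 0$ and this term drops out. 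In the minimal case $H\equiv 0$, so this term vanishes for \emph{any} variation, which is why the hypotheses $\int_\Sigma f\,\mathrm{d}A=0$ and volume preservation may be omitted there.

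The boundary term is the crux. Using $\overline{N}=(\cos\theta)N+(\sin\theta)\nu$ one checks the algebraic identity $\nu-(\cos\theta)\overline{\nu}=(\sin\theta)\overline{N}$ at $t=0$; since the variation is proper, $\tfrac{\partial\Phi}{\partial t}$ is tangent to $\partial M$ along $\partial\Sigma$, so the boundary integrand vanishes at $t=0$. Hence only its $t$-derivative survives, and I would expand
\[
\tfrac{d}{dt}\big|_0 g\!\left(\tfrac{\partial\Phi}{\partial t},\nu_t-(\cos\theta)\overline{\nu}_t\right)=g\!\left(D_t\xi,(\sin\theta)\overline{N}\right)+g\!\left(\xi,D_t\nu_t-(\cos\theta)D_t\overline{\nu}_t\right)
\]
at $t=0$, where $D_t$ is covariant differentiation along the $t$-curves. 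The acceleration term is handled by differentiating the identity $g(\tfrac{\partial\Phi}{\partial t},\overline{N}_t)\equiv 0$ and invoking the Weingarten relation $g(\nabla_v\overline{N},w)=\II(v,w)$, which converts it into a multiple of $\II$. The derivatives $D_t\nu_t$ and $D_t\overline{\nu}_t$ bring in $\partial f/\partial\nu$, the shape operators $A$ and $\II$, and the rotation relations; in particular, the inverted relation $N=(\cos\theta)\overline{N}-(\sin\theta)\overline{\nu}$ yields the pointwise boundary identity $g(\xi,\overline{\nu})=-f/\sin\theta$, which is what produces the coefficients $1/\sin\theta$ and $\cot\theta$ in $q$.

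I expect the main obstacle to be precisely this boundary computation: tracking the covariant derivatives of the conormal $\nu_t$ and of the boundary normal $\overline{\nu}_t$ along the variation, and verifying that all terms tangential to $\partial\Sigma$ and all cross terms either cancel between the two groups or integrate to zero around the closed curves of $\partial\Sigma$, leaving only $\int_{\partial\Sigma}\big(\tfrac{\partial f}{\partial\nu}-qf\big)f\,\mathrm{d}L$ with $q=\tfrac{1}{\sin\theta}\II(\overline{\nu},\overline{\nu})+(\cot\theta)A(\nu,\nu)$. This is the bookkeeping that makes the Ros--Souam computation long; once it is in place, the two displayed forms of $E''(0)$ coincide by the definition $L_\Sigma=\Delta_\Sigma+(\Ric(N)+\norm{A}^2)$, and the minimal case follows by the remark above.
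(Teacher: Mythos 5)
The paper itself does not prove this proposition: it defers entirely to the appendix of Ros--Souam \cite{ros1997}, so there is no in-paper argument to compare against. Your skeleton follows the same standard route as that reference, and its structural reductions are all correct: differentiating the first variation formula once more in $t$; disposing of the term $-H\,\tfrac{\mathrm{d}}{\mathrm{d}t}\int_\Sigma u(t)\,\mathrm{d}A_t = -H\,V''(0)$ by volume preservation (or by $H\equiv 0$ in the minimal case, which is exactly why the hypotheses $\int_\Sigma f\,\mathrm{d}A=0$ and volume preservation can be dropped there); identifying $\dot H(0)=L_\Sigma f$ from the CMC hypothesis; and noting that $\nu-(\cos\theta)\overline{\nu}=(\sin\theta)\overline{N}$ together with properness makes the zeroth-order boundary integrand vanish at $t=0$, so that the derivative of $\mathrm{d}L_t$ contributes nothing. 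The identities $g(D_t\xi,\overline{N})=-\II(\xi,\xi)$ and $g(\xi,\overline{\nu})=-f/\sin\theta$ are also correct and do explain where the coefficient $1/\sin\theta$ in $q$ comes from.

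The gap is that the boundary computation --- which is the entire nontrivial content of the proposition, since it is what produces the precise Robin coefficient $q=\tfrac{1}{\sin\theta}\II(\overline{\nu},\overline{\nu})+(\cot\theta)A(\nu,\nu)$ together with the $\partial f/\partial\nu$ term --- is described but not carried out, and you yourself flag it as the expected obstacle. To close it you must actually compute $D_t\nu_t$ and $D_t\overline{\nu}_t$ at $t=0$ in terms of $f$, $\partial f/\partial\nu$, $A$ and $\II$, decompose $\xi$ on $\partial\Sigma$ as $\xi=aT-\tfrac{f}{\sin\theta}\overline{\nu}$, and verify that the terms involving $\II(T,T)$, $\II(T,\overline{\nu})$, $A(\nu,T)$ and the tangential derivatives along $\partial\Sigma$ genuinely cancel or integrate to zero over the closed boundary curves; this is precisely where the $(\cot\theta)A(\nu,\nu)$ contribution emerges and where sign errors are easiest to make. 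As written, the proposal is a correct and well-organised plan, consistent with the reference the paper cites, but it is not yet a proof of the stated formula.
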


The capillary CMC immersion $\varphi : \Sigma \to M$ (or just $\Sigma$) is called \textit{stable} if $E''(0) \geq 0$ for any volume preserving variation of $\varphi$. If $\varphi$ is a capillary minimal immersion, we call it stable whenever $E''(0) \geq 0$ for every variation of $\varphi$.

Alternatively, let $\mathcal{F} = \{ f \in H^1(\Sigma) : \int_\Sigma f  \, \mathrm{d} A = 0 \}$, where $H^1(\Sigma)$ is the first Sobolev space of $\Sigma$. The index form $Q : H^1(\Sigma) \times H^1(\Sigma) \to \mathbb{R}$ of $\Sigma$ is given by
\begin{align} \label{index form}
Q(f,h) = \int_\Sigma \left[ g(\nabla f, \nabla h) - (\Ric(N)+ \norm{A}^2) f h \right] \, \mathrm{d} A - \int_{\partial \Sigma} q f h \, \mathrm{d} L.
\end{align}
Then $\varphi$ is a capillary CMC stable immersion if and only if $Q(f,f) \geq 0$ for every $f \in \mathcal{F}$. If $\varphi$ is a capillary minimal immersion, then it is stable precisely when $Q(f,f) \geq 0$ for every $f \in H^1(\Sigma)$.

The \textit{stability index} of a capillary CMC (resp. minimal) hypersurface $\Sigma$ is the dimension of the largest vector subspace of $\mathcal{F}$ (resp. $H^1(\Sigma)$) restricted to which the bilinear form $Q$ is negative definite. Thus, stable hyperfurfaces are those which have index equal to zero. Intuitively, this means that no variation of a capillary minimal stable hypersurface decreases area up to second order. 

It is possible to analitically compute the stability index of a capillary CMC (resp. minimal) hypersurface $\Sigma$ in terms of the spectrum of its Jacobi operator with a Robin boundary condition. Indeed, the boundary condition
\begin{align*}
\frac{\partial f}{\partial \nu} = q f 
\end{align*}
is elliptic for the Jacobi operator $L_\Sigma$. Therefore, there exists a nondecreasing and divergent sequence $\lambda_1 \leq \lambda_2 \leq \cdots \leq \lambda_k \nearrow \infty$ of eigenvalues for the problem
\begin{align} \label{problem}
\begin{cases}
L_\Sigma(f) + \lambda f = 0 \quad &\text{on } \Sigma \\
\frac{\partial f}{\partial \nu} = q f \quad &\text{on } \partial \Sigma
\end{cases}
\end{align}
The index of $\Sigma$ is then the number of negative eigenvalues of the problem above associated to (smooth) eigenfunctions that lie in $\mathcal{F}$ (resp. $H^1(\Sigma)$) (see more details in \cite{chen2014} and \cite{schoen2006}).

\section{Proof of the Theorems} \label{capillary_results}

This section is devoted to prove the Theorems mentioned in the Introduction. 

Recall that, given a Riemannian $3$-manifold $(M^3,g)$ with nonempty boundary and a fixed angle $\theta \in (0, \pi)$, we can define the following functional in the space of compact properly immeresd surfaces in $M$:
\begin{align*}
\ind_{\theta}(\Sigma) = \frac{1}{2} \inf_M R_M \vert \Sigma \vert + \frac{1}{\sin \theta} \inf_{\partial M} H_{\partial M} \vert \partial \Sigma \vert,
\end{align*}
provided that the scalar curvature $R_M$ and the mean curvature $H_{\partial M}$ are bounded from below.

We first show that if $\Sigma$ is capillary minimal stable, then $\ind_{\theta}(\Sigma)$ can be bounded above by the topology of $\Sigma$. We also characterise the equality case.

We begin with a simple lemma.

\begin{lemma} \label{mean curvature identity}
Let $(M^3,g)$ be a Riemannian $3$-manifold with nonempty boundary, and let $\Sigma$ be a two-sided capillary CMC surface embedded in $M$ with contact angle $\theta \in (0, \pi)$ and mean curvature equal to $H$. Then:
\begin{align*}
\II(\overline{\nu}, \overline{\nu}) + (\cos \theta) A(\nu, \nu) + (\sin \theta) k_g = H_{\partial M} + H \cos \theta
\end{align*}
\end{lemma}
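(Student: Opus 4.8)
The plan is to reduce the stated identity to a single scalar relation along $\partial\Sigma$ and then prove that relation by a two-line computation of an acceleration. Work pointwise along $\partial\Sigma$ and let $T$ be a unit tangent field to $\partial\Sigma$, so that $\{T,\nu\}$ is an orthonormal basis of $T\Sigma$ and $\{T,\overline{\nu}\}$ is an orthonormal basis of $T\partial M$, while $\{N,\nu\}$ and $\{\overline{N},\overline{\nu}\}$ are related by the rotation of angle $\theta$ recorded before the statement. Taking traces of the two second fundamental forms in these frames gives $H = A(T,T) + A(\nu,\nu)$ and $H_{\partial M} = \II(T,T) + \II(\overline{\nu},\overline{\nu})$. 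Hence, after cancelling $\II(\overline{\nu},\overline{\nu})$ and the term $(\cos\theta)A(\nu,\nu)$ from both sides, the asserted identity is equivalent to the purely tangential equation
\begin{align*}
\II(T,T) + (\cos\theta)A(T,T) = (\sin\theta)k_g.
\end{align*}

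The heart of the argument is to evaluate the normal component $g(\nabla_T T, \overline{N})$ of the boundary curve's acceleration in two ways, where $\nabla$ is the ambient Levi--Civita connection. On one hand, since $T$ is tangent to $\partial M$ we have $g(T,\overline{N}) = 0$ along $\partial\Sigma$; differentiating along $T$ and using $\II(v,w) = g(\nabla_v\overline{N},w)$ yields
\begin{align*}
g(\nabla_T T, \overline{N}) = -g(T, \nabla_T\overline{N}) = -\II(T,T).
\end{align*}
On the other hand, substituting $\overline{N} = (\cos\theta)N + (\sin\theta)\nu$ and using $g(\nabla_T T, N) = A(T,T)$ (from $g(T,N)=0$ and the definition of $A$) together with $g(\nabla_T T, \nu) = -k_g$ gives
\begin{align*}
g(\nabla_T T, \overline{N}) = (\cos\theta)A(T,T) - (\sin\theta)k_g.
\end{align*}
Equating the two expressions produces exactly the tangential relation above, and reinserting it into the left-hand side of the lemma reassembles the traces into $H_{\partial M} + H\cos\theta$, completing the proof.

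The one genuinely delicate point — and the place a careless computation goes astray — is the bookkeeping of signs and orientations. One must fix $k_g = -g(\nabla_T T, \nu)$ for the geodesic curvature of $\partial\Sigma$ in $\Sigma$ with respect to the outward conormal $\nu$ (which is the convention making Gauss--Bonnet read $\int_{\partial\Sigma} k_g\,\mathrm{d}L = 2\pi$ on the flat disc), and one must take $H$ and $H_{\partial M}$ as the traces of $A$ and $\II$ with respect to $N$ and $-\overline{N}$ respectively, consistent with the first variation formula of Proposition~\ref{first variation}. A quick sanity check in the free boundary case $\theta = \pi/2$, where $\overline{N} = \nu$ and the identity collapses to $k_g = \II(T,T)$, confirms that these choices are mutually compatible. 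Once the conventions are pinned down, no integration or curvature identity is required: the lemma is an algebraic consequence of the rotation relating $\{N,\nu\}$ and $\{\overline{N},\overline{\nu}\}$, and in fact only the pointwise contact-angle relation between these frames is used, so the constancy of $H$ plays no role in this particular statement.
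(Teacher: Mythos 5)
Your proposal is correct and follows essentially the same route as the paper: both arguments compute $g(\nabla_T T,\overline{N})$ in two ways using the rotation $\overline{N}=(\cos\theta)N+(\sin\theta)\nu$, and then reassemble the traces $H=A(T,T)+A(\nu,\nu)$ and $H_{\partial M}=\II(T,T)+\II(\overline{\nu},\overline{\nu})$. Your sign conventions match those of the paper, and your observation that the identity is pointwise (so constancy of $H$ is not needed) is accurate.
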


\begin{proof}
Let $T$ be a unit tangent vector field along $\partial \Sigma$. Since $\Sigma$ is CMC, we know that $A(\nu, \nu) = H - A(T, T)$. So, 
\begin{align*}
(\cos \theta) A(\nu, \nu) + (\sin \theta) k_g & -H \cos \theta = - (\cos \theta) A(T,T) + (\sin \theta) k_g \\
&=g(- \nabla_T T, (\cos \theta) N + (\sin \theta) \nu) \\
&=g(- \nabla_T T, \overline{N}) \\
&=\II(T, T).
\end{align*}
But $\{T, \overline{\nu}\}$ is an orthonormal referential for $T (\partial M)$ along $\partial \Sigma$. Thus,
\begin{align*}
\II(\overline{\nu}, \overline{\nu}) + (\cos \theta) A(\nu, \nu) + (\sin \theta) k_g  = \II(\overline{\nu}, \overline{\nu}) + \II(T, T) + H \cos \theta = H_{\partial M} + H \cos \theta,
\end{align*}
as we wanted.
\end{proof}

\begin{proof}[Proof of Theorem \ref{inf rigidity intro}]
Since $\Sigma$ is capillary minimal stable, we know that $Q(f,f) \geq 0$ for any $f \in H^1(\Sigma)$, where $Q$ is the index form given by (\ref{index form}).  In particular, taking $f \equiv 1$ yields
\begin{align} \label{thmA eq1}
\int_{\Sigma} (\Ric(N) + \norm{A}^2) \, \mathrm{d} A + \int_{\partial \Sigma} q \, \mathrm{d} L \leq 0.
\end{align}
By the Gauss equation, we have:
\begin{align*} 
\Ric(N) = \frac{1}{2}(R_M + H_{\Sigma}^2 - \norm{A}^2) - K_{\Sigma},
\end{align*}
where $H_{\Sigma} \equiv 0$ and $K_\Sigma$ are the mean and Gaussian curvatures of $\Sigma$. Plugging this into (\ref{thmA eq1}) and using Gauss-Bonnet theorem for the term involving $K_\Sigma$, we obtain:
\begin{align*}
0 &\geq \frac{1}{2} \int_{\Sigma} (R_M + \norm{A}^2) \, \mathrm{d} A - \left( 2 \pi \chi(\Sigma) - \int_{\partial \Sigma} k_g \, \mathrm{d} L \right) +  \frac{1}{\sin \theta} \int_{\partial \Sigma}  \left[ \II(\overline{\nu}, \overline{\nu}) + (\cot \theta) A(\nu, \nu) \right] \, \mathrm{d} L \\
&= \frac{1}{2} \int_{\Sigma} (R_M + \norm{A}^2) \, \mathrm{d} A +  \frac{1}{\sin \theta} \int_{\partial \Sigma}  \left[ \II(\overline{\nu}, \overline{\nu}) + (\cot \theta) A(\nu, \nu) + (\sin \theta) k_g \right] \, \mathrm{d} L - 2 \pi \chi(\Sigma)  \\
&\geq \frac{1}{2} \int_{\Sigma} R_M \, \mathrm{d} A +  \frac{1}{\sin \theta} \int_{\partial \Sigma}  \left[ \II(\overline{\nu}, \overline{\nu}) + (\cot \theta) A(\nu, \nu) + (\sin \theta) k_g \right] \, \mathrm{d} L - 2 \pi \chi(\Sigma).
\end{align*}
Using Lemma \ref{mean curvature identity} with $H = 0$, we have:
\begin{align*}
0 &\geq \frac{1}{2} \int_{\Sigma} R_M \, \mathrm{d} A +  \frac{1}{\sin \theta} \int_{\partial \Sigma} H_{\partial M} \, \mathrm{d} L - 2 \pi \chi(\Sigma) \\
&\geq \frac{1}{2} \inf_M R_M \vert \Sigma \vert + \frac{1}{\sin \theta} \inf_{\partial M} H_{\partial M} \vert \partial \Sigma \vert - 2 \pi \chi(\Sigma),
\end{align*}
proving the first assertion of the theorem.
	
Let us assume now that equality holds. It is immediate to see that $\Sigma$ must be totally geodesic, item (b) holds and $Q(1,1) = 0$. The latter implies that $Q(1,f) = 0$ for any $f \in H^1(\Sigma)$. Indeed, if $c > 0$, then
\begin{align*}
0 \leq Q(1 - cf, 1 - cf) = Q(1,1) - 2 cQ(1,f) + c^2 Q(f,f) = -2c Q(1,f) + c^2 Q(f,f).
\end{align*}
Dividing by $c$ and rearranging, we obtain
\begin{align*}
Q(1,f) \leq \frac{c}{2} Q(f,f).
\end{align*}
Now we let $c \searrow 0$ and conclude that $Q(1,f) \leq 0$. The reverse inequality is analogous. This proves the claim. Thus, we know that
\begin{align*}
Q(1,f) = \int_\Sigma  \Ric(N) f  \, \mathrm{d} A + \int_{\partial \Sigma} q f \, \mathrm{d} L = 0
\end{align*}
for any $f \in H^1(\Sigma)$. This implies that $\Ric(N) = 0$ and $q = 0$. Since we already know that $A = 0$, it follows that $\II(\overline{\nu}, \overline{\nu}) = 0$, proving item (c). 
	
It remains to calculate the geodesic curvature $\overline{k}_g$ of $\Sigma$ in $\partial M$. We have
\begin{align*}
\overline{k}_g := g(-\nabla_T T, \overline{\nu}) = g(-\nabla_T T, -(\sin \theta) N + (\cos \theta) \nu) = -(\sin \theta)A(T,T) + (\cos \theta) k_g = (\cos \theta) k_g.
\end{align*}
But Lemma \ref{mean curvature identity} implies that $(\sin \theta)k_g = H_{\partial M}$. So, $\overline{k}_g = (\cot \theta) H_{\partial M}$, proving (a). The converse statement can be easily proved.
\end{proof}

We are amost ready to prove Theorem \ref{local foliation intro}. Recall that $Z$ is a vector field in $M$ which is tangent to $\partial M$ along the entire boundary of $M$ and which satisfies $g(Z, N) = 1$ along $\Sigma$. Let $\varphi = \varphi(x,t)$ be the local flow of $Z$ and fix $\alpha \in (0,1)$.

We begin with a lemma, whose proof can be found in the appendices of \cite{ambrozio2015} and \cite{li2020}.

\begin{lemma} \label{variation of mean curvature}
Let $(M^3,g)$ be a Riemannian $3$-manifold with nonempty boundary, and let $\varphi : \Sigma \to M$ a two-sided proper embedding of a compact surface $\Sigma$ with boundary. Fix a proper variation $\Phi : \Sigma \times (-\varepsilon, \varepsilon) \to M$ of $\varphi$ and let $\Sigma_t = \Phi (\Sigma \times \{t\})$. We use the subscript $t$ to denote all terms related to $\Sigma_t$, e.g. $N_t$ is the unit normal, $H_t$ is the mean curvature, etc. Let $\rho_t = g(N_t, \xi_{\Phi, t})$ and let $\theta_t$ be the contact angle between $\Sigma_t$ and $\partial M$. Then the following formulae hold:
\begin{align*} 
\frac{\partial H_t}{\partial t} &= L_{\Sigma_t}(\rho_t) - \mathrm{d}H_t(\xi_{\Phi,t}^{\top}) \quad \text{on } \Sigma_t, \\
\frac{\partial (\cos \theta_t)}{\partial \nu_t} &= - (\sin \theta_t) \frac{\partial \rho_t}{\partial \nu_t} + (\cos \theta_t) A_t(\nu_t, \nu_t) \rho_t + \II(\overline{\nu}_t, \overline{\nu}_t) \rho_t + \mathrm{d}\theta_t(W_t) \quad \text{on } \partial \Sigma_t,
\end{align*}
where $(\cdot)^\top$ denotes the orthogonal projection onto $T \Sigma_t$, $L_{\Sigma_t}$ is the Jacobi operator of $\Sigma_t$ and $W_t$ is a certain vector field along $\partial \Sigma_t$. In particular, if each $\Sigma_t$ is capillary CMC with contact angle $\theta \in (0, \pi)$, then
\begin{align}
\frac{\mathrm{d} H_t}{\mathrm{d} t} &= L_{\Sigma_t}(\rho_t)  \quad \text{on } \Sigma_t \label{variation equations 1}\\
\frac{\partial \rho_t}{\partial \nu_t} &=  \left[ \frac{1}{\sin \theta} \II(\overline{\nu}_t, \overline{\nu}_t) + (\cot \theta) A_t(\nu_t, \nu_t) \right] \rho_t \quad \text{on } \partial \Sigma_t \label{variation equations 2}
\end{align}
\end{lemma}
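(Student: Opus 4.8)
The plan is to derive both displayed formulae from the first variation of the mean curvature and of the unit normal, and then to read off the capillary CMC specialisation by setting the appropriate derivatives to zero. Throughout I would decompose the variational field along each leaf as $\xi_{\Phi,t} = \rho_t N_t + \xi_{\Phi,t}^\top$, where $\rho_t = g(N_t, \xi_{\Phi,t})$ and $\xi_{\Phi,t}^\top$ is the part tangent to $\Sigma_t$, so that the interior and boundary effects of the normal and tangential motions can be separated cleanly. I would also use the orthonormal relations $\overline{N} = (\cos\theta_t)N_t + (\sin\theta_t)\nu_t$ and $\overline{\nu}_t = -(\sin\theta_t)N_t + (\cos\theta_t)\nu_t$ recalled in Section \ref{preliminaries_capillary}, since the boundary computation is driven by resolving everything in these two bases.

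For the interior formula I would invoke the classical first-variation-of-mean-curvature computation. A purely normal deformation with speed $\rho_t$ changes $H_t$ by the Jacobi operator $L_{\Sigma_t}(\rho_t) = \Delta_{\Sigma_t}\rho_t + (\Ric(N_t) + \norm{A_t}^2)\rho_t$, while a tangential deformation only reparametrises the leaf $\Sigma_t$ and hence differentiates the geometric function $H_t$ along $\xi_{\Phi,t}^\top$. Combining these gives $\frac{\partial H_t}{\partial t} = L_{\Sigma_t}(\rho_t) - \mathrm{d}H_t(\xi_{\Phi,t}^\top)$, the sign of the last term being fixed by the paper's convention $A(X,Y) = g(-\nabla_X N, Y)$. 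This step is standard and I would quote it rather than rederive it.

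The boundary formula is the delicate part and the main obstacle. Here I would differentiate the angle relation $\cos\theta_t = g(N_t, \overline{N})$ along $\partial\Sigma_t$ in $t$, writing $\frac{\partial}{\partial t}\cos\theta_t = g(\tfrac{\partial N_t}{\partial t}, \overline{N}) + g(N_t, \tfrac{\partial \overline{N}}{\partial t})$. The variation of the unit normal has tangential (gradient) part $-\nabla\rho_t$, whose $\nu_t$-component pairs with the $\nu_t$-component of $\overline{N}$ to give $-(\sin\theta_t)\frac{\partial\rho_t}{\partial\nu_t}$. The second term comes from the displacement of the boundary foot point, which is forced to remain on $\partial M$ as it is pushed by the normal part $\rho_t N_t$; this constrained sliding brings in both the shape operator $A_t$ of $\Sigma_t$ and the second fundamental form $\II$ of $\partial M$, producing the combination $(\cos\theta_t)A_t(\nu_t,\nu_t)\rho_t + \II(\overline{\nu}_t,\overline{\nu}_t)\rho_t$, while the residual tangential motion of the foot point along $\partial\Sigma_t$ is packaged into $\mathrm{d}\theta_t(W_t)$ for a suitable vector field $W_t$. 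Keeping careful track of which component of $\xi_{\Phi,t}$ moves the point off the surface versus along $\partial M$, and correctly separating the $A_t$-contribution from the $\II$-contribution as the foot point slides, is precisely the bookkeeping that makes this step laborious; since the full computation already appears in the appendices of \cite{ambrozio2015} and \cite{li2020}, I would present these organising identities and refer there for the remaining details.

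Finally, for the capillary CMC specialisation I would impose that each $\Sigma_t$ is capillary CMC with fixed contact angle $\theta$. Then $H_t$ is constant on each leaf, so $\mathrm{d}H_t \equiv 0$ and the tangential term drops, giving $\frac{\mathrm{d}H_t}{\mathrm{d}t} = L_{\Sigma_t}(\rho_t)$, which is (\ref{variation equations 1}). Since $\theta_t \equiv \theta$ is constant, both $\frac{\partial(\cos\theta_t)}{\partial\nu_t}$ and $\mathrm{d}\theta_t(W_t)$ vanish; solving the resulting algebraic relation for $\frac{\partial\rho_t}{\partial\nu_t}$ and dividing by $\sin\theta$ yields the Robin condition (\ref{variation equations 2}) with coefficient $\frac{1}{\sin\theta}\II(\overline{\nu}_t,\overline{\nu}_t) + (\cot\theta)A_t(\nu_t,\nu_t)$, which is exactly the quantity $q$ from Section \ref{preliminaries_capillary}.
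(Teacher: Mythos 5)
Your proposal is correct and follows essentially the same route as the paper, which likewise defers the two general variation formulae to the appendices of \cite{ambrozio2015} and \cite{li2020} and only extracts the capillary CMC specialisation (\ref{variation equations 1})--(\ref{variation equations 2}) by observing that $\mathrm{d}H_t$ vanishes on each leaf and that $\theta_t\equiv\theta$ kills both the left-hand side and the $\mathrm{d}\theta_t(W_t)$ term, after which one divides by $\sin\theta$. No substantive difference to report.
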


\begin{proof}[Proof of Theorem \ref{local foliation intro}]
Given a function $u \in C^{2,\alpha}(\Sigma)$, let $\Sigma_u = \{ \phi(x, u(x)) : x \in \Sigma \}$. If the norm of $u$ is small enough, then $\Sigma_u$ is a properly embedded surface in $M$. As in the statement of Lemma \ref{variation of mean curvature} we use the subscript $u$ to denote the terms related to $\Sigma_u$.
	
Now consider the Banach spaces 
\begin{align*}
E &= \left\lbrace u \in C^{2,\alpha}(\Sigma) : \int_\Sigma u \, \mathrm{d} A = 0 \right\rbrace, \\
F &= \left\lbrace u \in C^{0,\alpha}(\Sigma) : \int_\Sigma u \, \mathrm{d} A = 0 \right\rbrace.
\end{align*}
For small $\varepsilon > 0$ and $\delta > 0$, define the map $\Psi : B_E(0,\delta) \times (-\varepsilon, \varepsilon) \to F \times C^{1,\alpha}(\partial \Sigma)$ by
\begin{align*}
\Psi(u, t) = \left( H_{u+t} - \frac{1}{\vert \Sigma \vert} \int_{\Sigma} H_{u+t} \, \mathrm{d} A, g(N_{u+t}, \overline{N}_{u+t}) - \cos \theta \right)
\end{align*}
where $B_E(0, \delta)$ denotes the open ball of radius $\delta$ centered at the origin of $E$. We claim that $D \Psi(0,0) : E \times \mathbb{R}$ is an isomorphism when restricted to $\{0\} \times E$. To prove this, let $v \in E$. Notice that the map $\Phi : \Sigma \times (-\varepsilon, \varepsilon) \ni (x,s) \mapsto \phi(x, sv(x)) \in M$ is a proper variation of $\Sigma$ whose variational vector field is
\begin{align*}
\xi_{\Phi}(x) = \left. {\frac{\partial}{\partial s}} \right|_{s=0} \phi(x,sv(x)) = v(x) \frac{\partial \phi}{\partial s}(x,0) = v(x) Z(x), \quad x \in \Sigma.
\end{align*}
So, using the formulae from Lemma \ref{variation of mean curvature} and the fact that the Jacobi operator of $\Sigma$ is just the Laplacian, since $\Sigma$ is infinitesimally rigid, we obtain:
\begin{align} \label{derivative Psi}
D \Psi(0,0) \cdot (v, 0) &= \left. {\frac{\mathrm{d}}{\mathrm{d} s}} \right|_{s=0} \Psi(sv, 0) = 
\left( \Delta_\Sigma v - \frac{1}{\vert \Sigma \vert} \int_\Sigma \Delta_\Sigma v \, \mathrm{d} A, -(\sin \theta) \frac{\partial v}{\partial \nu}\right) \\
&=\left( \Delta_\Sigma v - \frac{1}{\vert \Sigma \vert} \int_{\partial \Sigma} \frac{\partial v}{\partial \nu} \, \mathrm{d} A, -(\sin \theta) \frac{\partial v}{\partial \nu}\right) .
\end{align} 
If this is equal to zero, then $v$ is harmonic and satisfies a Neumann condition on the boundary of $\Sigma$. This implies that $v$ is constant, and this constant must be equal to zero since $v \in E$, which proves that $D \Psi(0,0)$ is injective. The surjectivity follows from classical results for Neumann type boundary conditions for the Laplace operator.
	
The final step is to apply the implicit function theorem: for some smaller $\varepsilon > 0$, there exists a function $(-\varepsilon, \varepsilon) \ni t \mapsto u(t) \in B_E(0, \delta)$ such that $u(0) = 0$ and $\Psi(u(t),t) = (0,0)$ for any $t \in (-\varepsilon, \varepsilon)$. This means precisely that the surfaces
\begin{align*}
\Sigma_{u(t) + t} = \{ \phi(x, u(t)(x) + t) : x \in \Sigma \}
\end{align*} 
have constant mean curvature and meet $\partial M$ along their boundaries at an angle of $\theta$.
	
Now let $w : \Sigma \times (-\varepsilon, \varepsilon) \to \mathbb{R}$ be given by $w(x,t) = u(t)(x) + t$. Then $w(x,0) = 0$ and $w(\cdot, t)  - t = u(\cdot) \in B_E(0, \delta)$. Define $G : \Sigma \times (-\varepsilon, \varepsilon) \to M$ by $G(x,t) = \phi(x, w(x,t))$ and observe that this is a proper variation of $\Sigma$ whose variational vector field is 
\begin{align*}
\xi_G(x) = \frac{\partial G}{\partial t}(x,0) = \frac{\partial w}{\partial t}(x,0) \frac{\partial \phi}{\partial t}(x,0) = \frac{\partial w}{\partial t}(x,0) Z(x), \quad x \in \Sigma.
\end{align*}
Since 
\begin{align*}
0 = \Psi(u(t), t) = \left( H_{w(\cdot, t)} - \frac{1}{\vert \Sigma \vert} \int_{\Sigma} H_{w(\cdot, t)} \, \mathrm{d} A, g(N_{w(\cdot, t)}, \overline{N}_{w(\cdot, t)}) - \cos \theta \right)
\end{align*}
for all $t \in (-\varepsilon, \varepsilon)$, differentiating this equation with respect to $t$ implies that $\left. \frac{\partial w}{\partial t} \right|_{t=0}$ satisfies the Neumann problem on $\Sigma$ (see equation (\ref{derivative Psi})). Therefore, it must be constant. But
\begin{align*}
\int_\Sigma (w(x,t) - t) \, \mathrm{d} A(x) = \int_\Sigma u(t) \, \mathrm{d} A = 0, \quad t \in (-\varepsilon, \varepsilon)
\end{align*}
since $u(t) \in E$. By differentiating this equation with respect to $t$ and evaluating it at $t = 0$ we obtain that $\int_\Sigma \left. \frac{\partial w}{\partial t} \right|_{t=0} \, \mathrm{d} A = \vert \Sigma \vert$. Thus, $\left. \frac{\partial w}{\partial t} \right|_{t=0} = 1$ as we wanted to show.
	
Finally, since $G(x,0) = \phi(x,0) = x$ for every $x \in \Sigma$, $\xi_G = Z$ is transverse to $\Sigma$ and $\Sigma$ is properly embedded, by taking a smaller $\varepsilon > 0$ if necessary, we may assume that $G$ parametrises a foliation of a neighbourhood of $\Sigma$ in $M$. This concludes the proof.
\end{proof}

The proof of Theorem \ref{local foliation 2 intro} is similar to the proof of Theorem \ref{local foliation intro}. This way, we only sketch it, indicating the main differences.

\begin{proof}[Proof of Theorem \ref{local foliation 2 intro}]
As in the previous proof, for $u \in C^{2,\alpha}(\Sigma)$, let $\Sigma_u = \{ \phi(x, u(x)) : x \in \Sigma \}$.
	
Now consider the Banach spaces 
\begin{align*}
E &= \left\lbrace u \in C^{2,\alpha}(\Sigma) : \int_\Sigma u \, \mathrm{d} A = 0 \right\rbrace, \\
G &= \left\lbrace u \in C^{1,\alpha}(\partial \Sigma) : \int_{\partial \Sigma} u \, \mathrm{d} L = 0 \right\rbrace.
\end{align*}
For small $\varepsilon > 0$ and $\delta > 0$, define the map $\Lambda : B_E(0,\delta) \times (-\varepsilon, \varepsilon) \to G \times C^{0,\alpha}(\Sigma)$ by
\begin{align*}
\Lambda(u, t) = \left( g(N_{u+t}, \overline{N}_{u+t}) - \frac{1}{\vert \partial \Sigma \vert} \int_{\partial \Sigma} g(N_{u+t}, \overline{N}_{u+t}) \, \mathrm{d} L, H_{u+t} \right),
\end{align*}
where $B_E(0, \delta)$ denotes the open ball of radius $\delta$ centered at the origin of $E$. Then, using the formulae from Lemma \ref{variation of mean curvature} and the fact that the Jacobi operator of $\Sigma$ is just the Laplacian, since $\Sigma$ is infinitesimally rigid, we have:
\begin{align} 
D \Lambda(0,0) \cdot (v, 0) &= \left. {\frac{\mathrm{d}}{\mathrm{d} s}} \right|_{s=0} \Psi(sv, 0) = 
\left( -(\sin \theta) \frac{\partial v}{\partial \nu} + \frac{\sin \theta}{\vert \partial \Sigma \vert} \int_{\partial \Sigma} \frac{\partial v}{\partial \nu} \, \mathrm{d} L, \Delta_\Sigma v \right) 
\end{align} 
for any $v \in E$. It is easy to show that $D \Lambda(0,0)$ is an isomorphism when restricted to $E \times \{0\}$. Now, just apply the implicit function theorem and proceed as in the proof of Theorem \ref{local foliation intro}.
\end{proof}

Before proving Theorem \ref{rigidity_capillary intro}, let us clarify that a closed curve $\gamma$ in $\partial M$ is locally length-minimising when every nearby closed curve in $\partial M$ has length greater than or equal to the length of $\gamma$. 

Let us start with a lemma. 

\begin{lemma} \label{symmetry lemma}
Let $(M^3, g)$ be a Riemannian $3$-manifold with nonempty boundary and let $\varphi : \Sigma \to M$ be a two-sided capillary minimal proper immersion of a compact surface $\Sigma$, with contact angle equal to $\theta \in (0, \pi)$. If $\varphi$ is energy-minimising for the angle $\theta$, then, changing the sign of its unit normal, it is energy-minimising for the angle $\pi - \theta$.
\end{lemma}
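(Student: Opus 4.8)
The plan is to show that reversing the unit normal, $N \mapsto -N$, transforms the capillary data for the angle $\theta$ into the capillary data for the angle $\pi-\theta$ in such a way that the energy functional is literally unchanged along every proper variation; the conclusion is then immediate.

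First I would record the effect of the sign change on the fundamental vector fields. The outward conormal $\nu$ of $\partial\Sigma$ in $\Sigma$ is intrinsic to $\Sigma$ and does not depend on the choice of $N$, so it is unchanged, and $\overline{N}$ (the outward normal of $\partial M$) is fixed as well. Since $g(-N,\overline{N}) = -\cos\theta = \cos(\pi-\theta)$, the new contact angle is exactly $\pi-\theta$. Finally, the orientation convention relating $\{N,\nu\}$ and $\{\overline{N},\overline{\nu}\}$ forces the new conormal of $\partial\Sigma$ in $\partial M$ to be $-\overline{\nu}$: replacing $N$ by $-N$ reverses the orientation of the pair $\{N,\nu\}$ in $(T\partial\Sigma)^\perp$, and since $\overline{N}$ is unchanged, the matching conormal must flip sign.

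The key step is to check that the wetting area functional changes sign. The orientation of $\partial\Sigma$ entering the definition of $W$ is the boundary orientation induced by the orientation of $\Sigma$ determined by $N$; reversing $N$ reverses it, and hence reverses the sign of the integral $\int_{\partial\Sigma\times[0,t]}\Phi^*(\mathrm{d}A_{\partial M})$. Thus the wetting area functional $\widetilde{W}$ associated with $-N$ satisfies $\widetilde{W}(t) = -W(t)$ for every proper variation $\Phi$. Equivalently, this can be read off from Proposition \ref{first variation}: comparing the stated formula for $E'(0)$ with the splitting $E_\theta'(0) = A'(0) - (\cos\theta)W'(0)$ and matching the affine-in-$\cos\theta$ parts (valid since $\cos\theta$ takes infinitely many values as $\theta$ runs over $(0,\pi)$) yields $W'(s) = \int_{\partial\Sigma_s} g(\xi_s,\overline{\nu}_s)\,\mathrm{d}L$ at every time $s$; the replacement $\overline{\nu}_s \mapsto -\overline{\nu}_s$ then gives $\widetilde{W}' = -W'$, so $\widetilde{W} = -W$ since both vanish at $t=0$.

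It remains to assemble these facts. Writing $E_\theta$ for the energy associated with $N$ and $\widetilde{E}_{\pi-\theta}$ for the energy associated with $-N$, and using that the area functional $A$ is independent of the choice of normal, I compute
\begin{align*}
\widetilde{E}_{\pi-\theta}(t) = A(t) - (\cos(\pi-\theta))\,\widetilde{W}(t) = A(t) + (\cos\theta)(-W(t)) = A(t) - (\cos\theta)\,W(t) = E_\theta(t).
\end{align*}
Since the two functionals coincide on every proper variation, and hence assign the same value to every competitor, $\varphi$ minimises $\widetilde{E}_{\pi-\theta}$ precisely when it minimises $E_\theta$; thus if $\varphi$ is energy-minimising for $\theta$, then with the reversed normal it is energy-minimising for $\pi-\theta$. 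The only delicate point is the orientation bookkeeping behind $\widetilde{W} = -W$; once that is settled, the remainder is a direct substitution.
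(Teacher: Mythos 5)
Your proof is correct, but it takes a genuinely different route from the paper's. The paper keeps all orientation conventions fixed and instead reparametrises the competitors: given a proper variation $\Phi$ it forms the time-reversed variation $\tilde{\Phi}(x,t)=\Phi(x,-t)$, computes $W_{\tilde{\Phi}}(t)=-W_{\Phi}(-t)$ by an explicit substitution in the pullback integral defining the wetting area, and concludes $E_{\tilde{\Phi},\pi-\theta}(t)=E_{\Phi,\theta}(-t)\geq E_{\Phi,\theta}(0)$; since $\Phi\mapsto\tilde{\Phi}$ is a bijection on proper variations, minimality for the angle $\pi-\theta$ follows. You instead keep each variation fixed and track how the data transform under $N\mapsto -N$: the conormal $\overline{\nu}$ flips by the orientation-matching convention, the wetting area changes sign, and therefore $\widetilde{E}_{\pi-\theta}=E_{\theta}$ as \emph{identical} functionals, after which there is nothing left to prove. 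Your accounting makes explicit exactly where the sign of $W$ comes from, and your cross-check via the first variation formula --- $W'(s)=\int_{\partial\Sigma_s}g(\xi_s,\overline{\nu}_s)\,\mathrm{d}L_s$ must change sign with $\overline{\nu}_s$, and $W(0)=0$ --- is the cleanest way to pin down the otherwise implicit orientation convention in the definition of $W$; this consistency test is absent from the paper's argument and is, if anything, the more reliable of your two justifications, since the paper never states which orientation of $\partial\Sigma\times[0,t]$ is used. The paper's route avoids having to decide how $W$ depends on the choice of normal, at the cost of the time-reversal bookkeeping. Both arguments are short and both establish the lemma.
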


\begin{proof}
Given a proper variation $\Phi : \Sigma \times (-\varepsilon, \varepsilon) \to M$ of $\varphi$, define $\tilde{\Phi} : \Sigma \times (-\varepsilon, \varepsilon) \to M$ to be the variation given by $\tilde{\Phi}(x,t) = \Phi(x,-t)$. Denote by $W_{\Phi}$ and $W_{\tilde{\Phi}}$ the wetting areas of $\Phi$ and $\tilde{\Phi}$, respectively. We claim the $W_{\tilde{\Phi}}(t) = - W_{\Phi}(-t)$ for every $t \in (-\varepsilon, \varepsilon)$. To prove this, define $f : \partial \Sigma \times (-\varepsilon, \varepsilon) \to \mathbb{R}$ by 
\begin{align*}
f = \Phi^*(\mathrm{d} A_{\partial M})(T, \mathrm{d}s),
\end{align*}
where $T$ is a unit tangent vector field along $\partial \Sigma$ and $\mathrm{d}s$ is the standard vector field in $\mathbb{R}$. Then, for $t > 0$, we have
\begin{align*}
W_{\tilde{\Phi}}(t) &= \int_{\partial \Sigma \times [0,t]} f(x, -s) \, \mathrm{d} V(x,s) 
=  \int_{\partial \Sigma} \int_{0}^t f(x,-s) \, \mathrm{d} s \, \mathrm{d} x \\
&=- \int_{\partial \Sigma} \int_{0}^{-t} f(x,u) \, \mathrm{d} u \, \mathrm{d} x \\
&=-\int_{\partial \Sigma \times [0,-t]} f(x, u) \, \mathrm{d} V(x,u) \\
&=-W_{\Phi}(-t).
\end{align*}
The computation is analogous for $t < 0$. This proves the claim. To prove the lemma, we show that $E_{\tilde{\Phi}, \pi-\theta}(t)$ is equal to $E_{\Phi, \theta}(-t)$:
\begin{align*}
E_{\tilde{\Phi}, \pi-\theta}(t) &= A(\tilde{\Phi}(\Sigma, t)) - (\cos(\pi - \theta)) W_{\tilde{\Phi}}(t) \\
&= A(\Phi(\Sigma, -t)) - (\cos \theta) W_{\Phi}(-t) \\
&=E_{\Phi, \theta}(-t).
\end{align*}
Thus, we have
\begin{align*}
E_{\tilde{\Phi}, \pi - \theta}(t) = E_{\Phi, \theta}(-t) \geq E_{\Phi, \theta}(0) = E_{\tilde{\Phi}, \pi -\theta}(0)
\end{align*}
for any variation $\Phi$ and any $t$, which implies that $\varphi$ is energy-minimising for the angle $\pi - \theta$.
\end{proof}

\begin{proof}[Proof of Theorem \ref{rigidity_capillary intro}]
Let $G$ be a parametrisation of the capillary CMC foliation $\{\Sigma_t\}_{t \in (-\varepsilon, \varepsilon)}$ given by Theorem \ref{local foliation intro}. For each $t \in (-\varepsilon, \varepsilon)$, let $\rho_t = g(\partial_ G, N_t)$, where as usual, we use the subscript $t$ for terms related to $\Sigma_t$. Then, by equations (\ref{variation equations 1}) and (\ref{variation equations 2}) in Lemma \ref{variation of mean curvature}, we have
\begin{align} 
H'(t) &= \Delta_t \rho_t + (\Ric(N_t) + \norm{A_t}^2) \rho_t  \quad \text{on } \Sigma_t ,\label{equation 1}\\
\frac{\partial \rho_t}{\partial \nu_t} &=  q_t \rho_t = \left[ \frac{1}{\sin \theta} \II(\overline{\nu}_t, \overline{\nu}_t) + (\cot \theta) A_t(\nu_t, \nu_t) \right] \rho_t \quad \text{on } \partial \Sigma_t. \label{equation 2}
\end{align}
Notice that $\rho_0 = g(Z, N) = 1$, so we can assume $\rho_t > 0$ for any $t \in  (-\varepsilon, \varepsilon)$, maybe for some smaller $\varepsilon > 0$. Using the Gauss equation, we rewrite equation (\ref{equation 1}) as
\begin{align*}
H'(t) \rho_t^{-1} = \rho_t^{-1} \Delta_t \rho_t + \frac{1}{2}(R_{M,t} + H(t)^2 + \norm{A_t}^2) - K_t,
\end{align*}
where $R_{M,t}$ denotes the scalar curvature of $M$ restricted to $\Sigma_t$ and $K_t$ denotes the Gaussian curvature of $\Sigma_t$. Now recall that $H(t)$ is constant on $\Sigma_t$, so, integration by parts gives
\begin{align*}
H'(t) \int_{\Sigma} \rho_t^{-1} \, \mathrm{d}A_t &= \int_{\Sigma} \rho_t^{-2} \norm{\nabla \rho_t}^2 \, \mathrm{d}A_t + \int_{\partial \Sigma} \rho_t^{-1} \frac{\partial \rho_t}{\partial \nu_t} \, \mathrm{d}L_t \\
&+ \frac{1}{2} \int_{\Sigma} (R_{M, t} + H(t)^2 + \norm{A_t}^2) \, \mathrm{d}A_t -\int_{\Sigma} K_t \, \mathrm{d} A_t \\
&=\int_{\Sigma} \rho_t^{-2} \norm{\nabla \rho_t}^2 \, \mathrm{d}A_t + \int_{\partial \Sigma} q_t \, \mathrm{d}L_t \\
&+ \frac{1}{2} \int_{\Sigma} (R_{M, t} + H(t)^2 + \norm{A_t}^2) \, \mathrm{d}A_t - \int_{\Sigma} K_t \, \mathrm{d}A_t \\
&\geq  \int_{\partial \Sigma} q_t \, \mathrm{d}L_t + \frac{1}{2} \int_{\Sigma} (R_{M, t} + H(t)^2 ) \, \mathrm{d}A_t - \int_{\Sigma} K_t \, \mathrm{d}A_t.
\end{align*}
We now apply Gauss-Bonnet Theorem and Lemma \ref{mean curvature identity} to obtain
\begin{align*}
H'(t) \int_{\Sigma} \rho_t^{-1} \, \mathrm{d}A_t & \geq \int_{\partial \Sigma} (q_t + k_{g,t}) \, \mathrm{d}L_t + \frac{1}{2} \int_{\Sigma} (R_{M, t} + H(t)^2) \, \mathrm{d}A_t - 2 \pi \chi(\Sigma_t) \\
&= \frac{1}{\sin \theta} \int_{\partial \Sigma} (H_{\partial M,t} + H(t) \cos \theta) \, \mathrm{d}L_t + \frac{1}{2} \int_{\Sigma} (R_{M, t} + H(t)^2) \, \mathrm{d}A_t - 2 \pi \chi(\Sigma_t)  \\
&\geq \frac{1}{2} \int_{\Sigma} R_{M, t} \, \mathrm{d}A_t + \frac{1}{\sin \theta} \int_{\partial \Sigma} H_{\partial M,t} \, \mathrm{d}L_t + H(t) \vert \partial \Sigma_t \vert \cot \theta -2 \pi \chi(\Sigma_t).
\end{align*}
Since each $\Sigma_t$ is homeomorphic to $\Sigma$, $\chi(\Sigma_t) = \chi(\Sigma)$. Next, we use the fact that $\Sigma$ is infinitesimally rigid to apply the identity $\ind_\theta(\Sigma) = 2 \pi \chi(\Sigma)$:
\begin{align*}
H'(t) \int_{\Sigma} \rho_t^{-1} \, \mathrm{d}A_t &\geq \frac{1}{2} \inf_M R_M \vert \Sigma_t \vert + \frac{1}{\sin \theta} \inf_{\partial \Sigma} H_{\partial M} \vert \partial \Sigma_t \vert + H(t) \vert \partial \Sigma_t \vert \cot \theta -2 \pi \chi(\Sigma_t) \\
&=\frac{1}{2} \inf_M R_M (\vert \Sigma_t \vert - \vert \Sigma \vert) + \frac{1}{\sin \theta} \inf_{\partial \Sigma} H_{\partial M} (\vert \partial \Sigma_t \vert - \vert \partial \Sigma \vert) + H(t) \vert \partial \Sigma_t \vert \cot \theta .
\end{align*}
By hypothesis, $\inf H_{\partial M} \geq 0$. If each component of $\partial \Sigma$ is length-minimising, then the second term in the right hand side is nonnegative, and if $\inf H_{\partial M} = 0$, then it is equal to zero. In either case, we have
\begin{align*}
H'(t) \int_{\Sigma} \rho_t^{-1} \, \mathrm{d}A &\geq \frac{1}{2} \inf_M R_M (\vert \Sigma_t \vert - \vert \Sigma \vert) + H(t) \vert \partial \Sigma_t \vert \cot \theta \\
&=\frac{1}{2} \inf_M R_M \int_0^t \frac{\mathrm{d}}{\mathrm{d}s} \vert \Sigma_s \vert \, \mathrm{d}s + H(t) \vert \partial \Sigma_t \vert \cot \theta.
\end{align*}
By the first variation of area, 
\begin{align} 
\frac{\mathrm{d}}{\mathrm{d}s} \vert \Sigma_s \vert &= -\int_{\Sigma} H(s) \rho_s \, \mathrm{d}A_s + \int_{\partial \Sigma} g(\nu_s, \partial_t G) \, \mathrm{d}L_s \nonumber \\
&= -H(s)\int_{\Sigma} \rho_s \, \mathrm{d}A_s - \cot \theta \int_{\partial \Sigma} \rho_s \, \mathrm{d}L_s. \label{var of area}
\end{align}
Thus, we obtain the following differential inequality:
\begin{align*}
H'(t) \int_{\Sigma} \rho_t^{-1} \, \mathrm{d}A \geq -\frac{1}{2} \inf_M R_M \int_0^t \left[ H(s) \int_{\Sigma}  \rho_s \, \mathrm{d}A_s + \cot \theta \int_{\partial \Sigma} \rho_s \, \mathrm{d}L_s \right] \mathrm{d}s + H(t) \vert \partial \Sigma_t \vert \cot \theta.
\end{align*}
Let us rewrite this as
\begin{align} \label{differential inequality}
H'(t) \geq \frac{R_0}{\psi(t)} \int_0^t H(s) \xi(s) \, \mathrm{d}s + \cot \theta \left[ H(t) \vert \partial \Sigma_t \vert + \frac{R_0}{\psi(t)} \int_0^t \eta(s) \, \mathrm{d}s  \right],
\end{align}
where
\begin{itemize}
\item $R_0 = -\frac{1}{2} \inf_M R_M$ 
\item $\psi(t) = \int_{\Sigma} \rho_t^{-1} \,  \mathrm{d}A_t$
\item $\xi(t) = \int_{\Sigma}  \rho_t \, \mathrm{d}A_t$
\item $\eta(t) = \int_{\partial \Sigma} \rho_t \, \mathrm{d}L_t$
\end{itemize}
	
It is time to apply Lemma \ref{symmetry lemma}. Changing the sign of the unit normal $N$ of $\Sigma$ in the begining of the proof, if necessary, we may assume that $\theta \in (0, \pi/2]$ if $R_0 \geq 0$ and that $\theta \in [\pi/2, \pi)$ if $R_0 < 0$.  In either case, inequality (\ref{differential inequality}) implies that $H'(0) \geq 0$.  If $H'(0) > 0$ then $H(t) > 0$ for all small positive $t$. The first variation formula for the energy (see Proposition \ref{first variation}) would then give that $E(t) < E(0)$:
\begin{align*}
E(t) - E(0) = \int_0^t E'(s) \, \mathrm{d}s = - \int_0^t H(s) \left( \int_{\Sigma} \rho_s \, \mathrm{d}A_s \right) \, \mathrm{d}s < 0,
\end{align*}
which is absurd, since $\Sigma$ is energy-minimising. So $H'(0) = 0$. Now let $\alpha(t)$ denote the real function in the right hand side of inequality (\ref{differential inequality}). A simple computation yields $\alpha'(0) = \frac{\eta(0)}{\psi(0)}R_0 \cot \theta$. This is positive if and only if $R_0 \neq 0$ and $\theta \neq \pi/2$. In this case, we would have $H''(0) \geq \alpha'(0) > 0$, which is also absurd by the same reason. So either $\theta = \pi/2$, and we are done, or $R_0 = 0$. 
	
Let us analyse the case $R_0 = 0$. Inequality (\ref{differential inequality}) takes the form $H'(t) \geq  (\cot \theta) \vert \partial \Sigma_t \vert  H(t)$, and since $\cot \theta \geq 0$, this implies that $H(t) \geq 0$ for $t \geq 0$ and $H(t) \leq 0$ for $t \leq 0$. By the first variation of energy again, $E(t) \leq E(0)$ for any $t \in (-\varepsilon, \varepsilon)$. Thus, $E(t) \equiv E(0)$ because $\Sigma$ is energy-minimising. So, $H(t) \equiv 0$ by the same formula. In particular, every $\Sigma_t$ is capillary minimal stable. Applying Theorem \ref{inf rigidity intro} we obtain that, for every $t$,
\begin{align*}
2 \pi \chi(\Sigma_t) \geq \ind_\theta(\Sigma_t) = \frac{1}{\sin \theta} \inf_{\partial M} H_{\partial M} \vert \partial \Sigma_t \vert \geq \frac{1}{\sin \theta} \inf_{\partial M} H_{\partial M} \vert \partial \Sigma \vert = \ind_\theta(\Sigma) = 2 \pi \chi(\Sigma) = 2 \pi \chi(\Sigma_t).
\end{align*}
This way, every inequality above is an equality, which shows that every $\Sigma_t$ is infinitesimally rigid. In particular, $\partial \Sigma_t$ has constant geodesic curvature equal to $(\cot \theta) \inf_{\partial M} H_{\partial M}$ in $\partial M$. Since we are supposing either condition (a) or (b) in the theorem, we conclude that either $\inf_{\partial M} H_{\partial M} = 0$ or each component of $\partial \Sigma$ is a geodesic in $\partial M$. Hence, either $\theta = \pi/2$ and we are done, or $\inf_{\partial M} H_{\partial M} = 0$.
	
Assuming $R_0 = 0$ and $\inf_{\partial M} H_{\partial M} = 0$ we are going to show that $M$ is flat in a neighbourhood of $\Sigma$ and that $\partial M$ is totally geodesic in this neighbourhood.
	
\textbf{Claim}: The unit normals $N_t$ to $\Sigma_t$ define a parallel vector field.
	
Firstly, notice that each map $\rho_t$ is constant because it harmonic with Neumann boundary condition by equations (\ref{equation 1}) and (\ref{equation 2}). Now, let $(x_1, x_2)$ be local coordinates on $\Sigma$ and write $E_i = \partial_{x_i} G$. Since each $\Sigma_t$ is totally geodesic, we know that $\nabla_{E_i} N_t = 0$. Moreover, since $N_t$ is of unit length, $\nabla_{\partial_t G} N_t$ is tangent to $\Sigma_t$. Thus, 
\begin{align*}
g(\nabla_{\partial_t G} N_t, E_i) &= \partial_t g(N_t, E_i) - g(N_t, \nabla_{\partial_t G} E_i) \\
&= -g(N_t, \nabla_{E_i} \partial_t G) \\
&= -\partial_{x_i} g(N_t, \partial_t G) + g(\nabla_{E_i} N_t, \partial_t G) \\
&= -\partial_{x_i} \rho_t \\
&= 0.
\end{align*} 
This proves the Claim.
	
The Gauss equation for $\Sigma_t$ shows that $\overline{K}(\partial_{x_1} G \wedge \partial_{x_2} G) = K(\partial_{x_1} G \wedge \partial_{x_2} G) = 0$, where $K$ and $\overline{K}$ denote the sectional curvatures of $\Sigma$ and of $M$. This holds since $\Sigma_t$ is flat and totally geodesic. Now, since $N_t$ is parallel, $g(R(\cdot, \cdot) N_t, \cdot) = 0$. The symmetries of the curvature tensor then imply that $R \equiv 0$, that is, $M$ is flat around $\Sigma$.
	
Finally, let us show that $\partial M$ is totally geodesic near $\Sigma$. By infinitesimal rigidity we know that $\II(\nu_t, \nu_t) \equiv 0$ for every $t \in (-\varepsilon, \varepsilon)$. If we denote by $T_t$ a unit normal for $\partial \Sigma_t$, it is straighforward to check that $\II(T_t, \nu_t) \equiv 0$ and $\II(T_t,T_t) \equiv 0$. Since $\{\nu_t, T_t \}$ is an orthonormal referential for $T (\partial M)$ around $\partial \Sigma$, the bilinearity of $\II$ implies that $\II \equiv 0$, i.e. $\partial M$ is totally geodesic in the union of all $\partial \Sigma_t$, $t \in (-\varepsilon, \varepsilon)$. 
	
The last statement of the theorem (when $\theta = \pi/2$) follows from Theorem 7 in \cite{ambrozio2015}.
\end{proof}

\begin{remark} \label{variable angle}
Theorem \ref{rigidity_capillary intro} can be proved using the capillary minimal foliation with variable contact angle given by Theorem \ref{local foliation 2 intro} instead of the capillary CMC foliation of Theorem \ref{local foliation intro}.
\end{remark}

Finally, we move on to Theorem \ref{inequalities_fraser intro}, which gives conditions under which we can control the topology of low index surfaces in some Riemannian $3$-manifolds. 

As usual, we start with a lemma, whose proof can be found in \cite{chen2014}. 

\begin{lemma} \label{trick}
Let $\overline{\Sigma}$ be a closed orientable Riemannian surface of genus $g \geq 0$, and let $h : \overline{\Sigma} \to \mathbb{R}$ be any nonnegative smooth function. Then there exists a conformal map $\overline{f} : \overline{\Sigma} \to \mathbb{S}^2$ such that $\int_{\overline{\Sigma}} \overline{f}h \, \mathrm{d}A = 0$ and $\overline{f}$ has degree less than or equal to $\left[ \frac{g+3}{2} \right]$, where $[x]$ denotes the integer part of $x$.
\end{lemma}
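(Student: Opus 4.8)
The plan is to reduce the statement to two classical ingredients and to glue them by a topological balancing argument. If $h \equiv 0$ the integral condition is vacuous and any conformal branched cover works, so I may assume $h \not\equiv 0$, whence $\int_{\overline{\Sigma}} h \, \mathrm{d}A > 0$ by nonnegativity.

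\emph{First ingredient (a low-degree conformal cover).} The conformal class of $(\overline{\Sigma}, g)$ turns $\overline{\Sigma}$ into a compact Riemann surface of genus $g$. By the classical gonality estimate (an application of Riemann--Roch / Brill--Noether, exactly as used by Yang--Yau in their $\lambda_1$-bound), every such surface carries a nonconstant meromorphic function, that is, a holomorphic branched covering $f : \overline{\Sigma} \to \mathbb{S}^2$ of degree $d \leq \left[ \frac{g+3}{2} \right]$; being holomorphic, $f$ is conformal. Composing with a conformal automorphism (Möbius transformation) of $\mathbb{S}^2$ preserves the degree, so it suffices to adjust $f$ by such an automorphism so as to achieve $\int_{\overline{\Sigma}} \overline{f} \, h \, \mathrm{d}A = 0$, read as a vector identity in $\mathbb{R}^3 \supset \mathbb{S}^2$.

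\emph{Second ingredient (balancing).} View $\mathbb{S}^2 \subset \mathbb{R}^3$ and let $B^3$ be the open unit ball. For $v \in B^3$ let $F_v : \mathbb{S}^2 \to \mathbb{S}^2$ be the standard family of conformal dilations with $F_0 = \mathrm{id}$, normalised so that as $v \to p \in \partial B^3 = \mathbb{S}^2$ the maps $F_v$ converge to the constant $p$ uniformly on compact subsets of $\mathbb{S}^2 \setminus \{-p\}$. Define a continuous map $\mathcal{G} : B^3 \to \mathbb{R}^3$ by
\[
\mathcal{G}(v) = \int_{\overline{\Sigma}} (F_v \circ f) \, h \, \mathrm{d}A .
\]
An interior zero $\mathcal{G}(v_0) = 0$ yields the desired map $\overline{f} = F_{v_0} \circ f$, since the three scalar equations encoded in $\mathcal{G}(v_0) = 0$ are precisely $\int_{\overline{\Sigma}} \overline{f} \, h \, \mathrm{d}A = 0$, and $\deg \overline{f} = \deg f \leq \left[ \frac{g+3}{2} \right]$.

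\emph{Finding the zero.} Let $\mu = f_*(h \, \mathrm{d}A)$ be the pushforward measure on $\mathbb{S}^2$, of total mass $\int_{\overline{\Sigma}} h \, \mathrm{d}A$; since $h \, \mathrm{d}A$ is atomless and $f$ has finite fibres, $\mu$ is atomless, so $\mu(\{-p\}) = 0$ for every $p$. By dominated convergence ($|F_v| \leq 1$ and $F_v \to p$ $\mu$-a.e.) one gets $\mathcal{G}(v) \to \left( \int_{\overline{\Sigma}} h \, \mathrm{d}A \right) p$ as $v \to p \in \partial B^3$. Hence $\mathcal{G}$ extends continuously to $\overline{B^3}$ with $\mathcal{G}(v) = \left( \int_{\overline{\Sigma}} h \, \mathrm{d}A \right) v$ on $\partial B^3$, in particular nonvanishing there. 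If $\mathcal{G}$ had no zero in $B^3$, then $v \mapsto \mathcal{G}(v)/\lvert \mathcal{G}(v) \rvert$ would be a continuous retraction of $\overline{B^3}$ onto $\partial B^3 = \mathbb{S}^2$, which is impossible; so $\mathcal{G}(v_0) = 0$ for some $v_0 \in B^3$, completing the proof. I expect the boundary-concentration step to be the main obstacle: one must verify the limit of $\mathcal{G}(v)$ carefully and confirm that the dilations $F_v$ genuinely concentrate despite the branch points of $f$, and it is exactly here that the nonnegativity of $h$ (ensuring $\mu \geq 0$ with positive mass and no cancellation) is essential; the algebro-geometric existence of the degree-$\left[ \frac{g+3}{2} \right]$ cover is the other nontrivial, though standard, input.
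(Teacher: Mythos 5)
Your proof is correct and follows essentially the same route as the paper's source for this lemma: the paper gives no proof of its own but defers to \cite{chen2014}, where the argument is exactly this combination of the Brill--Noether gonality bound (a degree $\leq \left[\frac{g+3}{2}\right]$ holomorphic branched cover of $\mathbb{S}^2$) with the Hersch balancing trick via conformal dilations and a Brouwer no-retraction argument. The one genuinely delicate step, continuity of $\mathcal{G}$ up to $\partial B^3$, is handled correctly by your observation that the pushforward measure $f_*(h\,\mathrm{d}A)$ is atomless, so no further repair is needed.
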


\begin{proof}[Proof of Theorem \ref{inequalities_fraser intro}]
We analyse each one of the three cases.
\begin{itemize}
\item[(i)] Let $\{E_1, E_2, E_3\}$ a local orthonormal frame on $\Sigma$ such that $E_1$ and $E_2$ are tangent to $\Sigma$ and $E_3$ coincides with the unit normal $N$ to $\Sigma$. Since $\Sigma$ is minimal, the Gauss equation gives that
\begin{align*}
K_{\Sigma} = \overline{K}(E_1 \wedge E_2) - \frac{1}{2} \norm{A}^2.
\end{align*}
So,
\begin{align}
\Ric(N) + 2 K_{\Sigma} &= \overline{K}(E_1, N) + \overline{K}(E_2, N) + \overline{K}(E_1,E_2) - \norm{A}^2 \nonumber \\
&= \Ric(E_1) + \Ric(E_2) - \norm{A}^2. \label{ricci equation}
\end{align}
		
Now let $h \geq 0$ be the first eighenfunction of Problem (\ref{problem}). By gluing a disc on each boundary component of $\Sigma$, we may view $\Sigma$ as a compact domain of a closed orientable Riemannian surface $\overline{\Sigma}$. So, Lemma \ref{trick} furnishes a conformal map $\overline{f} : \overline{\Sigma} \to \mathbb{S}^2$ such that $\int_{\overline{\Sigma}} \overline{f} h \, \mathrm{d}A = 0$ and $\deg(\overline{f}) \leq \left[ \frac{g+3}{2} \right]$. Let $f$ denote the restriction of $\overline{f}$ to $\Sigma$. If we write $f = (f_1, f_2, f_3)$, then we know that each component $f_i$ is orthogonal to the first eigenfunction $h$. Since $\Sigma$ has index $1$, we have:
\begin{align*}
Q(f_i,f_i) = \int_\Sigma \left[ \norm{\nabla f_i}^2 - (\Ric(N)+ \norm{A}^2) f_i^2 \right] \, \mathrm{d} A - \int_{\partial \Sigma} q f_i^2 \, \mathrm{d} L \geq 0.
\end{align*}
Summing over $i$ and since $\sum_{i=1}^3 \vert f_i \vert^2 = 1$, we get
\begin{align*}
\int_\Sigma \left[ \norm{\nabla f}^2 - (\Ric(N)+ \norm{A}^2) \right] \, \mathrm{d} A - \int_{\partial \Sigma} q \, \mathrm{d} L \geq 0.
\end{align*}
By the conformality of $\overline{f}$, 
\begin{align} \label{conformality}
\int_{\Sigma} \norm{\nabla f}^2 \, \mathrm{d}A < \int_{\overline{\Sigma}} \norm{\nabla \overline{f}}^2 \, \mathrm{d}A = 2 \Area(\overline{f}(\overline{\Sigma})) = 2 \Area(\mathbb{S}^2) \deg(\overline{f}) \leq 8\pi \left[ \frac{g+3}{2} \right].
\end{align}
Therefore, 
\begin{align*}
\int_{\Sigma} (\Ric(N) + \norm{A}^2) \, \mathrm{d}A + \int_{\partial \Sigma} q \, \mathrm{d}L < 8\pi \left[ \frac{g+3}{2} \right].
\end{align*}
We now use equation (\ref{ricci equation}) and the hypotheses of curvature $\Ric \geq 0$ and $H_{\partial M} \geq 0$ to continue:
\begin{align*}
-2 \int_{\Sigma} K_{\Sigma} \, \mathrm{d}A \leq \int_{\Sigma} (\Ric(E_1) + \Ric(E_2) - 2K_{\Sigma}) \, \mathrm{d}A < 8\pi \left[ \frac{g+3}{2} \right] - \int_{\partial \Sigma} q \, \mathrm{d} L.
\end{align*}
By Gauss-Bonnet theorem,
\begin{align*}
-2\left[ 2\pi(2 - 2g - k) - \int_{\partial \Sigma} k_g \, \mathrm{d}L \right] < 8\pi \left[ \frac{g+3}{2} \right] - \int_{\partial \Sigma} q \, \mathrm{d} L.
\end{align*}
Applying Lemma \ref{mean curvature identity} and rearanging, we arive at
\begin{align*}
\int_{\partial \Sigma} k_g \, \mathrm{d} L < 2\pi \left[ 9 - (-1)^g - 2(g+k) \right],
\end{align*}
as we wanted. The particular case mentioned in the theorem follows immediately.
		
\item[(ii)] To prove this item, we use the identity
\begin{align} \label{gauss identity}
\Ric(N) = \frac{1}{2}(R_M + H_{\Sigma}^2 - \norm{A}^2) - K_{\Sigma}.
\end{align}
Let $f : \Sigma \to \mathbb{S}^2$ be the same function of item (i). As before,
\begin{align*}
\int_\Sigma \left[ \norm{\nabla f}^2 - (\Ric(N)+ \norm{A}^2) \right] \, \mathrm{d} A - \int_{\partial \Sigma} q \, \mathrm{d} L \geq 0.
\end{align*}
Using equation (\ref{gauss identity}) and inequality (\ref{conformality}), we obtain
\begin{align*}
\frac{1}{2}\int_{\Sigma} (R_M + \norm{A}^2) \, \mathrm{d}A - \int_{\Sigma} K_{\Sigma} \, \mathrm{d}A + \int_{\partial \Sigma} q \, \mathrm{d}L < 8\pi \left[ \frac{g+3}{2} \right].
\end{align*}
Now we use Gauss-Bonnet therorem and Lemma \ref{mean curvature identity} to write
\begin{align*}
\frac{1}{2}\int_{\Sigma} (R_M + \norm{A}^2) \, \mathrm{d}A + \frac{1}{\sin \theta} \int_{\partial \Sigma} H_{\partial M} \, \mathrm{d}L < 8\pi \left[ \frac{g+3}{2} \right] + 2\pi(2 - 2g - k).
\end{align*}
Finally, we use the curvature assumptions to get
\begin{align*}
\frac{1}{2} \inf_M R_M \vert \Sigma \vert + \frac{1}{\sin \theta} \inf_{\partial M} H_{\partial M} \vert \partial \Sigma \vert < 2 \pi\left[ 7 - (-1)^g - k \right],
\end{align*}
as we wanted.
		
\item[(iii)] Item (a) follows from Theorem \ref{inf rigidity intro}, since under the curvature hypotheses $\Sigma$ must be a disc. Item (b) follows from from item (ii) of the current theorem (again, $\Sigma$ must be a disc).
\end{itemize}
\end{proof}

\end{document}